\DeclareMathAlphabet{\chan}{T1}{pzc}{mb}{it}
\newcommand{\op}{\operatorname}
\newcommand{\mc}{\mathcal}
\newcommand{\abs}[1]{\left\vert#1\right\vert}
\newcommand{\set}[1]{\left\{#1\right\}}
\newcommand{\ignore}[1]{}
\newcommand{\defined}[1]{\textbf{#1}}
\newcommand{\pr}{\mathrm{pr}}
\newcommand{\w}{\omega}
\newcommand{\U}{\mathcal U}
\newcommand{\uhr}{\upharpoonright}
\newcommand{\vc}{\v{C}ech}
\newtheorem{prob}{Problem}
\newsavebox{\Theoremtype}
\newsavebox{\Theoremlabel}
\newtheoremstyle{break}
{\topsep}
{\topsep}
{\it}
{}
{}
{}
{ }
{\thmname{\textbf{#1}}\thmnumber{ \textbf{#2.}}}
\newtheoremstyle{ref}
{\topsep}	
{\topsep}	
{\it}
{}
{}
{}
{ }
{\thmname{{\bfseries#1}}\thmnumber{ \bfseries#2\thmnote{~\rm #3}\bfseries .}}
\theoremstyle{ref}
\newtheorem{thm}{Theorem}
\newtheorem{lem}[thm]{Lemma}
\newtheorem{claim}[thm]{Claim}
\newtheorem{prop}[thm]{Proposition}
\newtheorem{cor}[thm]{Corollary}
\newtheorem{ex}{Example}
\newtheoremstyle{nnref}
{\topsep}	
{\topsep}	
{\it}
{}
{}
{}
{ }
{\thmname{\textbf{#1}\thmnote{\textrm{ #3}}\textbf{.}}}
\theoremstyle{nnref}
\newtheorem{defn}{Definition}
\begin{document}
\sloppy
\title{Productively Lindel\"of and Indestructibly Lindel\"of Spaces}
\author{Haosui Duanmu, Franklin D. Tall\makebox[0cm][l]{$^1$} \ and Lyubomyr Zdomskyy\makebox[0cm][l]{$^2$}}

\footnotetext[1]{Research supported by NSERC grant A-7354.  Some of this work was undertaken at the Kurt G\"odel Research Center at the University of Vienna in July, 2012. The second author thanks that institution and especially the third author for their hospitality.\vspace*{1pt}}
\footnotetext[2]{The third author would like to thank FWF grant M1244-N13 for support for this research.}
\date{\today}
\maketitle

\begin{abstract}

\end{abstract}

\renewcommand{\thefootnote}{} \footnote
{\parbox[1.8em]{\linewidth}{$(2010)$ AMS Mathematics Subject
    Classification. Primary 54A25, 54A35, 54D20, 54B10, 03E35.}\vspace*{3pt}} \renewcommand{\thefootnote}{}
\footnote {\parbox[1.8em]{\linewidth}{Key words: productively Lindel\"of, indestructibly Lindel\"of, Rothberger, $\aleph_1$-Hurewicz, $\aleph_1$-Borel Conjecture, projectively $\sigma$-compact.}}
There has recently been considerable interest in \emph{productively Lindel\"of spaces}, i.e. spaces such that their product with every Lindel\"of space is Lindel\"of.  See e.g. \cite{AT}, \cite{TT}, \cite{AAJT}, \cite{T}, and work in progress by Miller, Tsaban, and Zdomskyy, Repovs and Zdomskyy, and by Brendle and Raghavan.  Here we make several related remarks about such spaces.  \textit{Indestructible Lindel\"of spaces}, i.e. spaces that remain Lindel\"of in every countably closed forcing extension, were introduced in \cite{aT1}.  Their connection with topological games and selection principles was explored in \cite{ST}.  We find further connections here.

\section{A sufficient condition for a space not to be productively Lindel\"of}
In \cite{AAJT}, a set of four conditions was given for a regular Lindel\"of space $X$ of countable type to be not productively Lindel\"of.  The conditions and proof were unnecessarily complicated because the authors wanted to produce a \textit{regular} $Z$ such that $X \times Z$ was not Lindel\"of.  This extra effort was not necessary because we can prove the following result.

\begin{lem}\label{Lem1}
Let $X$ be a Lindel\"of space.  If there is a Lindel\"of space $Z$ such that $X \times Z$ is not Lindel\"of, then there is a such a Lindel\"of $Z'$, which furthermore is $0$-dimensional $T_1$ and hence regular.
\end{lem}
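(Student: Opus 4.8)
The plan is to realize the desired $Z'$ as a subspace of a Cantor cube, transporting a witnessing cover along a natural (but non-continuous) map and repairing the loss of continuity by a closure operation. First I would fix an open cover $\mathcal{W}$ of $X\times Z$ with no countable subcover and refine it to basic rectangles, so that $\mathcal{W}=\{U_\alpha\times V_\alpha:\alpha\in A\}$ with each $U_\alpha$ open in $X$ and each $V_\alpha$ open in $Z$; refining cannot create a countable subcover, since countably many rectangles covering $X\times Z$ would yield countably many members of the original cover covering it. The family $\{V_\alpha:\alpha\in A\}$ is then an open cover of $Z$, and I encode it by the map $e\colon Z\to 2^A$ defined by $e(z)(\alpha)=1$ iff $z\in V_\alpha$. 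Since the Cantor cube $2^A$ is $0$-dimensional, compact and $T_1$, any subspace carved out of it will automatically be $0$-dimensional $T_1$, and hence regular.

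The difficulty is that $e$ need not be continuous (the $V_\alpha$ are open but not clopen in $Z$), so the image $e[Z]$ has no reason to be Lindel\"of; securing Lindel\"ofness is the main obstacle, and I would circumvent it with a compactness trick. Because $2^A$ is compact and $Z$ is Lindel\"of, the product $2^A\times Z$ is Lindel\"of, so the closure $\overline{\Gamma}$ of the graph $\Gamma=\{(e(z),z):z\in Z\}$ is a closed, hence Lindel\"of, subspace. I then set $Z'=\pi[\overline{\Gamma}]$, where $\pi$ is the projection of $2^A\times Z$ onto $2^A$. As a continuous image of a Lindel\"of space, $Z'$ is Lindel\"of, and as a subspace of $2^A$ it is $0$-dimensional and $T_1$; this supplies all the separation and covering properties required of $Z'$.

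It remains to transport the cover and verify that it still admits no countable subcover. Put $\hat V_\alpha=\{s\in Z':s(\alpha)=1\}$, a clopen subset of $Z'$, and consider $\{U_\alpha\times\hat V_\alpha:\alpha\in A\}$. The key point, where the closure is used, is that for every $(s,z)\in\overline{\Gamma}$ and every $\beta$ with $z\in V_\beta$ one has $s(\beta)=1$: otherwise $\{t\in 2^A:t(\beta)=0\}\times V_\beta$ would be a neighbourhood of $(s,z)$ meeting $\Gamma$, producing some $z'\in V_\beta$ with $e(z')(\beta)=0$, which is absurd. From this it follows that $\{U_\alpha\times\hat V_\alpha\}$ covers $X\times Z'$: given $(x,s)$ with $s\in Z'$, pick $z$ with $(s,z)\in\overline{\Gamma}$, then choose $\beta$ with $x\in U_\beta$ and $z\in V_\beta$ (possible since $\mathcal{W}$ covers $(x,z)$), whence $s(\beta)=1$ and $(x,s)\in U_\beta\times\hat V_\beta$. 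Finally, any countable subfamily $\{U_{\alpha_n}\times\hat V_{\alpha_n}\}$ covering $X\times Z'$ would in particular cover $X\times e[Z]$ (note $e[Z]\subseteq Z'$, as $\Gamma\subseteq\overline{\Gamma}$), and unwinding the definition of $e$ this makes $\{U_{\alpha_n}\times V_{\alpha_n}\}$ a countable subcover of $\mathcal{W}$, a contradiction. Hence $X\times Z'$ is not Lindel\"of, which completes the argument; I do not expect to need the Lindel\"ofness of $X$ itself anywhere in the proof.
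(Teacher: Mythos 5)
Your proof is correct, but it repairs the discontinuity of the encoding map by a different mechanism than the paper. Both arguments send $Z$ into a Cantor cube via the trace of the rectangular cover, but the paper replaces your point map $e$ by the set-valued map $\Phi_Z(z)=\set{C\subseteq\kappa : \set{\alpha : z\in V_\alpha}\subseteq C}$ (the upward closure of $e(z)$), cites Lemma~2 of \cite{Z} to see that $\Phi_Z$ is compact-valued and upper semicontinuous, and uses the fact that such maps preserve Lindel\"ofness to get $T=\Phi_Z(Z)$ Lindel\"of; it then shows $X\times T$ is not Lindel\"of indirectly, by pushing $X$ forward to $X'=\Phi_X(X)$ as well and covering $X'\times T$ with the sets $W_\alpha=\set{D\subseteq\kappa:\alpha\in D}$ pulled back along the intersection map $\Phi(A,C)=A\cap C$. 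You instead take the closure of the graph of $e$ in $2^A\times Z$, use the elementary facts that compact~$\times$~Lindel\"of is Lindel\"of, that closed subspaces and continuous images preserve Lindel\"ofness, and then verify non-Lindel\"ofness of $X\times Z'$ directly with the transported cover $\set{U_\alpha\times\hat V_\alpha}$, with no detour through an image of $X$. The two devices are closely related: your key observation that $(s,z)\in\overline{\Gamma}$ and $z\in V_\beta$ force $s(\beta)=1$ says exactly that $\overline{\Gamma}$ is contained in the (reversed) graph of $\Phi_Z$, which is closed precisely because $\Phi_Z$ is compact-valued usc into a compact space; so your $Z'$ is a subspace of the paper's $T$, and the graph-closure trick is the self-contained shadow of the usc lemma. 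What the paper's route buys is reusable machinery (the same compact-valued usc technique reappears in the proof of Theorem~\ref{vzad_pered}); what yours buys is an elementary, citation-free argument. Your closing remark is also accurate: neither proof uses Lindel\"ofness of $X$, only that of $Z$.
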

\begin{proof}
Let $\set{U_\alpha \times V_\alpha : \alpha < \kappa}$ be an open cover of $X \times Z$ which does not have a countable subcover.  Consider the following set-valued maps:
\[\Phi_X : X \to 2^{\kappa}, \Phi_X(x) = \set{A \subseteq \kappa : \set{\alpha : x \in U_\alpha} \subseteq A},\]
\[\Phi_Z : Z \to 2^{\kappa}, \Phi_Z(z) = \set{C \subseteq \kappa : \set{\alpha : z \in V_\alpha} \subseteq C},\]
\[\Phi_{X,Z} : X \times Z \to 2^{\kappa}, \Phi_{X, Z}(x,z) = \set{B \subseteq \kappa : \set{\alpha : \langle x, z \rangle \in U_\alpha \times V_\alpha} \subseteq B}\]

By Lemma 2 of \cite{Z} each of these maps is compact-valued and upper semicontinuous.  Lindel\"ofness is preserved by compact-valued upper semicontinuous maps, so $T = \Phi_Z(Z) \subseteq 2^{\kappa}$ is Lindel\"of.  To show $X \times T$ is not Lindel\"of, it suffices to show $X' \times T$ is not Lindel\"of, where $X' = \Phi_X(X)$.  Consider the map
\[\Phi : X' \times T \to 2^{\kappa}, \Phi(A,C) = A \cap C.\]
Notice that $A \cap C \neq \emptyset$ for any $A \in X'$ and $C \in T$.  Indeed, find $\langle x, z \rangle \in X \times Z$ such that $\set{\alpha : z \in V_\alpha} \subseteq C$ and $\set{\alpha : x \in U_\alpha} \subseteq A$.  Let $\alpha$ be such that $\langle x, z \rangle \in U_\alpha \times V_\alpha$.  Then $\alpha \in A \cap C$.  It follows from the above that $\mc{W} = \set{W_\alpha : \alpha < \kappa}$, where $W_\alpha = \set{D \subseteq \kappa : \alpha \in D}$ is an open cover of $X' \times T$.  However $\mc{W}$ has no countable subcover, since if $\set{W_\alpha : \alpha \in I}$ covers $X' \times T$, $\set{U_\alpha \times V_\alpha : \alpha \in I}$ is a cover of $X \times Z$.  Thus $X' \times T$ is not Lindel\"of and hence neither is $X \times T$.
\end{proof}

\begin{defn}
$L(X)$, the \defined{Lindel\"of number} of $X$, is the least cardinal $\lambda$ such that every open cover of $X$ has a subcover of size $\leq \lambda$.  The \defined{type} of $X$, $T(X)$, is the least cardinal $\kappa$ such that for each compact $L \subseteq X$ there is a compact $K$ including $L$ such that there is a base of size $\leq \kappa$ for the open sets including $K$. (For $T_{3\frac{1}{2}}$ $X$, this is equivalent to $L(\beta X - X) \leq \kappa$.)  The \defined{weight} of $X$, $w(X)$, is  the least cardinal of a base.
\end{defn}
Note that $T(X) \leq w(X)$.  From Lemma \ref{Lem1}, we obtain the following simplified version of the main theorem of \cite{AAJT}.  The proof is also a simplified version of that in \cite{AAJT}, so will be omitted.

\begin{thm}\label{Thm2}Let $\langle X, \mathcal{T} \rangle$ be a Lindel\"of space of countable type.  Suppose there is a $Y \subseteq X$ and a topology $\rho$ on $Y$ such that
\begin{enumerate}[i)]
\item{
$\mathcal{T}|Y \subseteq \rho$,
}
\item{
$\langle Y, \rho \rangle$ is not Lindel\"of,
}
\item{
any $K \subseteq X$ that is $\mathcal{T}$-compact is such that $K \cap Y$ is $\rho$-Lindel\"of.
}
\end{enumerate}
Then $X$ is not productively Lindel\"of.  Indeed there is a regular Lindel\"of $Z$ such that $X \times Z$ is not Lindel\"of.
\end{thm}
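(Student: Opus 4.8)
The plan is to apply Lemma~\ref{Lem1}, which is exactly the simplification that makes the argument short: it suffices to produce \emph{some} Lindel\"of $Z$ (not necessarily regular) with $X \times Z$ not Lindel\"of, for then Lemma~\ref{Lem1} upgrades it to a $0$-dimensional $T_1$, hence regular, witness $Z'$ with $X \times Z'$ not Lindel\"of. So I would spend all the effort manufacturing a single Lindel\"of $Z$ out of the data $(Y,\rho)$. The combinatorial input from (ii) is recorded once at the start: since $\langle Y,\rho\rangle$ is not Lindel\"of, fix a $\rho$-open cover with no countable subcover, which is the only place non-Lindel\"ofness enters.

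For the construction I would take $Z$ to have underlying set $Y$ with the topology $\sigma$ generated by the sets $Y\setminus C$, where $C$ ranges over the $\rho$-closed, $\rho$-Lindel\"of subsets of $Y$. Then $\sigma\subseteq\rho$, and $\langle Y,\sigma\rangle$ is Lindel\"of: for any nonempty $\sigma$-open $W$ the complement $Y\setminus W$ is a $\rho$-closed subset of some $\rho$-Lindel\"of $C$, hence itself $\rho$-Lindel\"of, hence $\sigma$-Lindel\"of since $\sigma\subseteq\rho$; covering $Y\setminus W$ by countably many $\sigma$-open sets and adjoining $W$ gives a countable subcover. Now consider the diagonal $D=\set{\langle y,y\rangle : y\in Y}\subseteq X\times Z$, whose subspace topology is the join $(\T|Y)\vee\sigma$. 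I would prove two things: that this join equals $\rho$, so that $D$ is homeomorphic to $\langle Y,\rho\rangle$ and therefore not Lindel\"of by (ii); and that $D$ is closed in $X\times Z$. Granting both, $X\times Z$ has a non-Lindel\"of closed subspace, so it is not Lindel\"of, and Lemma~\ref{Lem1} finishes the theorem.

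The main obstacle is precisely these two verifications, and both reduce to one technical point fed by the remaining hypotheses: that each point of $Y$ has small $\T$-neighbourhoods whose $Y$-traces have $\rho$-Lindel\"of, $\rho$-closed $\rho$-closures. Countable type supplies, around each point, a compact set of countable character, and (iii) makes the relevant trace $\rho$-Lindel\"of; this is what should let me, given a $\rho$-open $V\ni y$, choose a $\T|Y$-open $H\ni y$ with $\cl_\rho(H\setminus V)$ $\rho$-Lindel\"of, $\rho$-closed, and (since $y\in V$) missing $y$, so that $H\cap\bigl(Y\setminus\cl_\rho(H\setminus V)\bigr)$ is a basic join-open neighbourhood of $y$ contained in $V$, yielding $(\T|Y)\vee\sigma=\rho$. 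The same mechanism should separate an off-diagonal point $\langle x,w\rangle$ (necessarily $x\ne w$) from $D$: regularity of $X$ gives a $\T$-open $G\ni x$ with $w\notin\overline G$ and, again via countable type and (iii), with $\overline G\cap Y$ $\rho$-Lindel\"of, whence $Y\setminus(\overline G\cap Y)$ is a $\sigma$-open neighbourhood of $w$ disjoint from the trace $G\cap Y$, showing $D$ closed. The reason this is not circular is the genuine gap between (ii) and (iii): (ii) forbids $\langle Y,\rho\rangle$ from being Lindel\"of, while (iii) forces all its traces on compacta to be Lindel\"of, and coarsening $\rho$ to the co-($\rho$-Lindel\"of) topology $\sigma$ exploits exactly this gap, keeping $Z$ Lindel\"of while the full strength of $\rho$ reappears on the closed diagonal. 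Carrying out the neighbourhood bookkeeping with countable type is the laborious part I expect to dominate the proof.
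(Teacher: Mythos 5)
Your strategy of building a possibly non-regular Lindel\"of $Z$ and invoking Lemma~\ref{Lem1} at the end is exactly the paper's intent, and your verification that $\langle Y,\sigma\rangle$ is Lindel\"of is correct. But the two central claims --- that $(\mathcal{T}|Y)\vee\sigma=\rho$ on the diagonal and that $D$ is closed in $X\times Z$ --- are false, and the technical step meant to deliver them is exactly where the argument breaks: hypothesis (iii) controls traces on \emph{compact} sets only, whereas your separation arguments need a $\mathcal{T}$-neighbourhood $H$ of a point (or a closed set $\overline{G}$) with $\rho$-Lindel\"of trace on $Y$. Countable type gives a compact $K$ with a countable outer base $\set{H_n:n<\omega}$, but $\rho$-Lindel\"ofness of $K\cap Y=\bigcap_n (H_n\cap Y)$ does not pass to any single $H_n\cap Y$; in effect you would need local compactness of $X$. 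A concrete instance kills the construction outright: assume $\mathfrak{b}=\aleph_1$, let $X=\omega^\omega$, let $Y$ be a $\leq^*$-increasing unbounded family of size $\aleph_1$, and let $\rho$ be discrete. Then (i)--(iii) hold (compacta are bounded, so meet $Y$ in a countable set), the $\rho$-closed $\rho$-Lindel\"of sets are precisely the countable ones, and $\sigma$ is the co-countable topology on $Y$. At any condensation point $y$ of $Y$ every basic join-neighbourhood $H\cap(Y\setminus C)$ is uncountable, so the join is not discrete, i.e.\ not $\rho$; and $D$ is not closed, since for $x$ a condensation point of $Y$ and $w\in Y\setminus\set{x}$ every basic box about $\langle x,w\rangle$ meets $D$. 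Worse, in this instance $X\times\langle Y,\sigma\rangle$ is genuinely Lindel\"of: refine any open cover to boxes $[s]\times(Y\setminus C)$ with $s\in\omega^{<\omega}$ and $C$ countable; for each $s$ the intersection $I_s$ of the occurring $C$'s is countable and equals the intersection of countably many of them, and the countably many resulting boxes $[s]\times(Y\setminus I_s)$ absorb every point. So no alternative verification can rescue your $Z$; the co-$\rho$-Lindel\"of coarsening simply discards too much of $\rho$.

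The AAJT-style proof the paper alludes to uses countable type through the equivalence $T(X)\leq\aleph_0$ iff $\beta X\setminus X$ is Lindel\"of, and buys Lindel\"ofness of $Z$ with a ``remainder kernel'' rather than by coarsening $\rho$: take $Z$ with underlying set $Y\cup(\beta X\setminus X)$, topologized by keeping the full $\rho$ on $Y$ and giving remainder points the traces of $\beta X$-open sets. Given an open cover, countably many members cover the Lindel\"of remainder; the complement in $\beta X$ of their union is a compact $K\subseteq X$, and what is left of $Z$ lies in $K\cap Y$, which is $\rho$-Lindel\"of by (iii) (traces of $\beta X$-opens on $Y$ are $\rho$-open by (i)), so countably many further members suffice. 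Now the diagonal $\set{\langle y,y\rangle : y\in Y}\subseteq X\times Z$ carries $(\mathcal{T}|Y)\vee\rho=\rho$ --- the full strength of $\rho$ survives because $Z$ retains it --- hence is non-Lindel\"of by (ii), and it is closed by Hausdorff separation inside $\beta X$ (for $\langle x,z\rangle$ off the diagonal, disjoint $\beta X$-opens about $x$ and $z$ yield a box missing $D$, whether $z\in Y$ or $z$ is a remainder point). Lemma~\ref{Lem1} then upgrades to a regular witness. So your diagonal-plus-Lemma~\ref{Lem1} architecture is right, but the Lindel\"of $Z$ must come from the compact remainder, which is precisely the work countable type does and your $\sigma$ cannot.
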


The authors of \cite{AAJT} observe the following corollary.

\begin{cor}\label{Cor3}
Let $X$ be a Lindel\"of regular space of countable type.  If there is an uncountable $Y \subseteq X$ such that for each compact subset $K$ of $X$, $K \cap Y$ is countable, then $Y$ is not productively Lindel\"of.
\end{cor}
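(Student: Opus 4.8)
The plan is to apply Theorem \ref{Thm2} not to $X$ but to the subspace $Y$ itself, equipped with the subspace topology $\T|Y$, taking for the refined topology $\rho$ the \emph{discrete} topology on $Y$ and for the distinguished subset all of $Y$. If $Y$ fails to be Lindel\"of, then $Y\times\set{\ast}$ already witnesses that $Y$ is not productively Lindel\"of, so we may assume $Y$ is Lindel\"of (this is also needed to invoke Theorem \ref{Thm2}). Granting in addition that $\langle Y,\T|Y\rangle$ is of countable type (discussed below), Theorem \ref{Thm2} will produce a regular Lindel\"of $Z$ with $Y\times Z$ not Lindel\"of, which is precisely the assertion that $Y$ is not productively Lindel\"of. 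Note that the earlier, flawed route applied the theorem to $X$ and could only conclude something about $X\times Z$; running the argument over $Y$ from the outset is what makes the conclusion land on $Y$.

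To set up the application I would first observe that regularity passes to subspaces, so $\langle Y,\T|Y\rangle$ is regular. Conditions i) and ii) of Theorem \ref{Thm2} are immediate: the subspace topology is contained in the discrete topology $\rho$, and an uncountable discrete space is not Lindel\"of. The hypothesis on $Y$ is tailored to trivialize condition iii). Indeed, let $K\subseteq Y$ be compact in the subspace topology; then $K$ is compact in $X$, and since $K\subseteq Y$ we have $K=K\cap Y$, which is countable by assumption. A countable set is Lindel\"of in every topology, in particular in the discrete topology $\rho$, so iii) holds. Thus the entire content of the corollary is funneled into verifying the standing hypotheses of Theorem \ref{Thm2} for $Y$.

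The step I expect to be the main obstacle is establishing that $\langle Y,\T|Y\rangle$ has countable type. The natural strategy is to transfer countable type from $X$: given a compact $L\subseteq Y$, use countable type of $X$ to fix a compact $K_X\supseteq L$ in $X$ with a decreasing countable outer base $\set{U_n:n\in\w}$, and exploit that $K_X\cap Y$ is countable. The delicate point is that $\set{U_n}$ is a base only for the neighborhoods of $K_X$, not of the possibly smaller set $L$, so one must also separate $L$ (inside $Y$) from the countably many points of $(K_X\setminus L)\cap Y$. A useful tool here is the elementary compactness fact that $K_X\cup\set{y_n:n\in\w}$ is compact whenever $y_n\in U_n$ for all $n$ (any open set covering $K_X$ contains a tail of the $U_n$, hence a tail of the $y_n$); this pins the accumulation of stray points of $Y$ into $K_X$ and, combined with regularity and the countability of $K_X\cap Y$, should let one assemble a countable outer base in $Y$ for a suitable compact $K\subseteq Y$ with $L\subseteq K$. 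Once countable type of $Y$ is secured, regularity, Lindel\"ofness, and conditions i)--iii) are all in place, and Theorem \ref{Thm2} delivers the desired $Z$, completing the proof.
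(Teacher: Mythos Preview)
The paper gives no proof of Corollary~\ref{Cor3}; it is simply recorded as an observation from \cite{AAJT} and is meant to be a one-line consequence of Theorem~\ref{Thm2}: take $\rho$ to be the discrete topology on $Y$; then (i) is trivial, (ii) holds because $Y$ is uncountable, and (iii) holds because each $K\cap Y$ is countable and hence Lindel\"of in any topology. Theorem~\ref{Thm2} then gives that \emph{$X$} is not productively Lindel\"of. The ``$Y$'' in the printed conclusion is almost certainly a slip for ``$X$'': Theorem~\ref{Thm2} concludes only about the ambient space, and in the one place the corollary is actually used (the proof of Theorem~\ref{Thm5}) the ambient space and the distinguished subset are taken to be the same set, so the discrepancy is invisible there.

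Your proposal takes the printed ``$Y$'' literally and therefore tries to run Theorem~\ref{Thm2} with $\langle Y,\mathcal T|Y\rangle$ itself as the ambient space. You correctly note that this forces you to verify that $Y$ has countable type, you correctly flag this as the crux, and you sketch an attack using the countability of $K_X\cap Y$ together with the ``$K_X\cup\{y_n\}$ is compact'' trick---but you do not actually carry the argument through to a countable outer base in $Y$ for a suitable compact set. So as written the proof is incomplete precisely at the point you yourself identify as the obstacle. That obstacle, however, is an artifact of the misprint: with the intended conclusion about $X$, no verification of countable type for $Y$ is needed and the whole proof collapses to the one-liner above. In short, your reasoning is sound for the statement you were handed, but you are solving a harder problem than the authors intended, and the hard step is left unfinished.
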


\section{L-productive spaces}
\begin{defn}
A space $X$ is \defined{$\leq \kappa$-L-productive} if $L(X \times Y) \leq L(Y)$ whenever $\aleph_0 \leq L(Y) \leq \kappa$.  A space $X$ is \defined{L-productive} if $L(X \times Y) \leq L(Y)$ for all $Y$.  A space $X$ is \defined{powerfully Lindel\"of} if $X^\omega$ is Lindel\"of.
\end{defn}

Despite much effort, the following problem of E. A. Michael remains unsolved.

\begin{prob}
If $X$ is productively Lindel\"of, is $X$ powerfully Lindel\"of?
\end{prob}

The best result so far is:

\begin{lem}[\cite{A}]\label{Lem4Prea}
The Continuum Hypothesis (CH) implies that if $X$ is productively Lindel\"of and regular and $w(X) \leq \aleph_1$, then $X^\omega$ is Lindel\"of.
\end{lem}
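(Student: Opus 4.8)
The plan is to factor Lemma~\ref{Lem4Prea} through \emph{Alster's property}. Call a family $\mc U$ of $G_\delta$ subsets of $X$ a $G_\delta$-\emph{cover} if every compact $K\subseteq X$ is contained in a single member of $\mc U$, and call $X$ \emph{Alster} if every $G_\delta$-cover has a countable subcover. With this vocabulary the lemma splits into two implications: (I) under CH, a regular productively Lindel\"of space with $w(X)\le\aleph_1$ is Alster; and (II) in ZFC, every Alster space is powerfully Lindel\"of. The serious work, and the only place where CH and the weight bound are used, is (I); part (II) is the soft half.

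For (II) I would show that $X^\omega$ is itself Alster and that Alster spaces are Lindel\"of, so that $X^\omega$ is Lindel\"of. Alster spaces are Lindel\"of because they are productively Lindel\"of (multiply by a one-point space): the standard proof that Alster implies productively Lindel\"of covers $X\times Y$ by open boxes, pushes the cover of each slice $X\times\set{y}$ forward to a $G_\delta$-cover of $X$, and extracts countable subcovers. That the Alster property is preserved by countable products is a diagonal verification: a $G_\delta$-cover of $X^\omega$ is refined, slice by slice over the finite coordinates, to countably many $G_\delta$-covers of $X$, each absorbed by the hypothesis. Combining the two facts yields $X^\omega$ Lindel\"of.

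The heart is (I), which I would prove contrapositively: assuming $X$ is regular, $w(X)\le\aleph_1$, and \emph{not} Alster, I would manufacture a Lindel\"of space $Z$ with $X\times Z$ not Lindel\"of, contradicting productive Lindel\"ofness. Fix a base $\mc B$ with $\abs{\mc B}\le\aleph_1$ and a $G_\delta$-cover $\mc U$ with no countable subcover. The first step is to replace $\mc U$ by a $G_\delta$-cover $\set{G_\alpha:\alpha<\omega_1}$ of size $\aleph_1$ with the same defect, each written as a decreasing intersection $G_\alpha=\bigcap_n O_{\alpha,n}$ of open sets coded by countable data drawn from $\mc B$; regularity lets me arrange $\overline{O_{\alpha,n+1}}\subseteq O_{\alpha,n}$. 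Here CH enters through the count $\aleph_1^{\aleph_0}=2^{\aleph_0}=\aleph_1$, which keeps the supply of relevant $G_\delta$ sets (and of the compacta that must be caught) down to $\aleph_1$, so that an $\omega_1$-indexed subfamily can be chosen that still contains, for every compact $K$, a member $\supseteq K$. With such a cover in hand I would build a Lindel\"of space $Z$ of weight $\aleph_1$ — concretely a Lindel\"of $P$-space, or a suitable subspace of a power of $\omega+1$ — whose topology codes the sequences $\langle O_{\alpha,n}\rangle_n$, so that a natural open cover of $X\times Z$ mirrors $\mc U$ (the $P$-space-like structure of $Z$ being exactly what lets the $G_\delta$ sets $G_\alpha$ behave as open sets in the $Z$-direction) while inheriting the absence of a countable subcover from $\set{G_\alpha:\alpha<\omega_1}$. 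Since $Z$ is Lindel\"of, $X\times Z$ non-Lindel\"of contradicts productive Lindel\"ofness.

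The main obstacle is precisely this construction: producing a $Z$ that is simultaneously Lindel\"of and yet yields a product cover with no countable subcover, and verifying both properties at once. Equally delicate is the reduction to an $\aleph_1$-indexed family that \emph{remains} a $G_\delta$-cover — one must guarantee that every compact subset of $X$, not merely those coded inside the chosen $\aleph_1$-sized family, is still captured, and it is here that the weight bound (each open set being pinned down by countable base-data) and CH (the $\aleph_1^{\aleph_0}=\aleph_1$ count) do the real work. Regularity is used throughout to convert the abstract $G_\delta$-cover into an honest open cover of the product via shrinking and normality.
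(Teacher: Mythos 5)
You have reconstructed the correct skeleton: the paper itself gives no proof of Lemma \ref{Lem4Prea} (it is quoted from \cite{A}), and Alster's original argument is exactly your decomposition --- (I) CH plus productive Lindel\"ofness plus $w(X)\leq\aleph_1$ implies the Alster property, and (II) Alster spaces are powerfully Lindel\"of, which is the paper's Theorem \ref{ThmA}, also quoted from \cite{A}. Your CH counting for the reduction in (I) is sound: under CH there are only $\aleph_1^{\aleph_0}=\aleph_1$ ``canonical'' $G_\delta$'s built from a base of size $\aleph_1$, every compact $K$ inside a member $G$ of the given $k$-cover is inside a canonical $G_\delta$ between $K$ and $G$, and the resulting family of size $\aleph_1$ is still a $k$-cover with no countable subcover (one quibble: this is a \emph{refinement} of the original cover, not a subfamily of it, though a countable subcover of the refinement does yield one of the original).

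The genuine gap is that in (I) you stop precisely where the theorem begins. Having reduced to a canonical $k$-cover $\set{G_\alpha : \alpha<\omega_1}$ with no countable subcover, you propose to ``build a Lindel\"of space $Z$ \dots whose topology codes the sequences $\langle O_{\alpha,n}\rangle_n$'' and then explicitly concede that constructing $Z$ and verifying its Lindel\"ofness simultaneously with the non-Lindel\"ofness of $X\times Z$ is ``the main obstacle.'' That construction \emph{is} Alster's theorem: a Michael-style refinement declaring each $G_\alpha$ open is trivial to write down but hard to keep Lindel\"of, and Lindel\"ofness of $Z$ is exactly where the $k$-cover hypothesis (every compact set trapped in a single $G_\alpha$) and the $\omega_1$-enumeration must be exploited; your outline gives no indication of how, so (I) is restated rather than proved. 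Secondarily, in (II) your claim that ``$X^\omega$ is Alster'' by a ``diagonal verification'' underestimates a nontrivial theorem: compact subsets of $X^\omega$ must be captured via products of compacta $\prod_n \pi_n(K)$, and the argument is not a slicewise refinement over finite coordinates. Within this paper you could simply invoke Theorem \ref{ThmA} for (II) and concentrate all effort on (I), where your proposal currently has no proof.
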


Note that L-productive spaces are productively Lindel\"of.  Thus a more modest problem is:

\begin{prob}
Is every L-productive space powerfully Lindel\"of?
\end{prob}

We shall make some small progress toward solving this problem.  Since we occasionally will deal with spaces that are not necessarily Lindel\"of, it is convenient to assume from now on that all spaces are Tychonoff.

\begin{defn}
$Y \subseteq X$ is \defined{sKinny} if $\abs{Y \cap K}< \abs{Y}$ for every compact $K \subseteq X$.  A collection $\mathcal{G}$ of subsets of $X$ is a \defined{$k$-cover} if every compact subset of $X$ is included in a member of $\mathcal{G}$.  $A(X)$, the \defined{Alster degree} of $X$, is the least cardinal $\kappa$ such that every $k$-cover of $X$ by $G_\delta$'s has a subcover of size $\leq \kappa$.  If $A(X) \leq \aleph_0$, we say $X$ is \defined{Alster}.
\end{defn}

\begin{defn}
A space $X$ is \defined{$\aleph_1$-L-productive} if $L(X \times Y) \leq \aleph_1$ whenever $L(Y) \leq \aleph_1$.
\end{defn}
\noindent Note that this does not imply productively Lindel\"of.

\begin{thm}[{\cite{A}}]\label{ThmA}
Alster spaces are powerfully Lindel\"of.
\end{thm}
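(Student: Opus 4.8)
The plan is to prove the \emph{stronger} statement that every $G_\delta$ $k$-cover of $X^\omega$ admits a countable subcover, and to extract Lindel\"ofness from it. Two easy reductions come first. Since $X$ is Tychonoff, cozero sets form a base and finite unions of cozero sets are cozero (hence $G_\delta$); so, given an arbitrary open cover, I would refine it to a cover $\mathcal{W}$ by basic cozero boxes and observe that a countable subcover of $\mathcal{W}$ yields one of the original cover. Second, the Alster property implies Lindel\"ofness for Tychonoff spaces by a routine argument: given a cozero cover, the family of all finite unions of its members is a $G_\delta$ $k$-cover, an Alster-countable sub-$k$-cover of which consists of finite unions that together cover every point. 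Thus it suffices to control $k$-covers, and the engine of the whole argument is a productivity lemma.

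The engine is the statement that if $X$ is Alster and $Y$ is Lindel\"of then $X \times Y$ is Lindel\"of, whose proof I would reconstruct by what may be called the \textbf{pattern trick}. Fix a cover of $X \times Y$ by basic boxes $U \times V$. For a compact $K \subseteq X$, the slab $K \times Y$ is Lindel\"of (compact times Lindel\"of), so it is covered by countably many boxes $\set{U_n \times V_n : n < \omega}$. Now for each $y \in Y$ the compact set $K \times \set{y}$ meets only finitely many of these, giving $K \subseteq \bigcup_{n \in F_y} U_n$ for a finite $F_y \subseteq \set{n : y \in V_n}$; the crucial point is that, as $y$ varies, the finite unions $\bigcup_{n \in F_y} U_n$ range over a \emph{countable} collection of open supersets of $K$. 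Intersecting them produces a $G_\delta$ set $P_K \supseteq K$ with $P_K \times Y \subseteq \bigcup_n U_n \times V_n$. The family $\set{P_K : K \text{ compact}}$ is then a $G_\delta$ $k$-cover of $X$, so the Alster property of $X$ selects countably many $P_{K_i}$ capturing every compactum; the associated countably many boxes cover $X \times Y$. The same bookkeeping, run against products of compacta, upgrades this to the fact that finite powers $X^n$ are Lindel\"of (indeed one shows each $X^n$ inherits enough of the Alster property) by induction on $n$.

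The passage to the \emph{infinite} power is, I expect, the main obstacle. Here I would use the Tychonoff fact that every compact $C \subseteq X^\omega$ is contained in a product $\prod_n K_n$ of its compact projections $K_n = \pi_n(C) \subseteq X$, so that products of compacta are cofinal among the compacta of $X^\omega$, and that each basic box has finite support, so membership of a point is decided by finitely many coordinates. The strategy is a coordinatewise fusion: at each coordinate one slices a $G_\delta$ member against a \emph{compact} product in the remaining coordinates --- where the tube lemma makes the slice a genuine $G_\delta$ in $X$ --- and invokes the Alster property of $X$, while the pattern trick keeps the accumulating $G_\delta$ neighbourhoods under control. The genuine difficulty, and the crux of the proof, is that the tail $X^{[n,\omega)}$ is homeomorphic to $X^\omega$ itself, so no naive induction on coordinates closes; the countably many selections made at the successive coordinates must instead be organized into a \emph{single} fusion whose output is one countable subfamily of $\mathcal{W}$ simultaneously capturing \emph{every} product $\prod_n K_n$. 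Getting this bookkeeping to cohere --- ensuring that a choice adequate at stage $n$ survives the revelation of all later coordinates --- is where I expect essentially all the work to lie; once it is in place, the resulting countable subfamily captures every compactum of $X^\omega$, and Lindel\"ofness follows as in the reductions above.
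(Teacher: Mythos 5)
Your first two paragraphs are sound, and your ``pattern trick'' is exactly the device used in \cite{A} (note that the paper itself gives no proof of this theorem; it simply quotes Alster's result): the set $P_K=\bigcap_{y\in Y}\bigcup_{n\in F_y}U_n$ is a $G_\delta$ around $K$ with $P_K\times Y\subseteq\bigcup_{n}U_n\times V_n$, and countably many $P_{K_i}$ covering $X$ finish the product case. (One minor slip: $K\times\{y\}$ need not \emph{meet} only finitely many of the boxes; rather, compactness gives a finite subfamily of $\{U_n\times V_n : y\in V_n\}$ covering it, which is what you in fact use.) The reductions in your first paragraph are likewise correct.

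The genuine gap is that the theorem \emph{is} your third paragraph, and there you present a strategy rather than a proof: you correctly name the obstruction (the tail $X^{[n,\omega)}$ is homeomorphic to $X^\omega$, so no naive induction on coordinates closes) and then state that the fusion resolving it ``is where I expect essentially all the work to lie,'' without constructing it. This gap is not mechanical, and your engine cannot fill it on its own: whether productive Lindel\"ofness implies powerful Lindel\"ofness is precisely Michael's problem, recorded as open in this very paper, so no amount of iterating ``Alster $\times$ Lindel\"of is Lindel\"of'' can reach $X^\omega$. What closes the regress in the actual argument is a strengthened invariant: one proves that the \emph{Alster property itself} passes to finite, and then countable, products. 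Using, as you note, that every compactum of $X^\omega$ lies in a product $\prod_n K_n$ of compacta, and that every $G_\delta$ set around such a product contains a $G_\delta$ box $\prod_n P_n$ (countably many applications of the tube lemma), one runs your pattern trick coordinatewise along a countably branching tree of selections, obtaining a countable family of $G_\delta$ boxes that is still a $k$-cover of $X^\omega$; hence $X^\omega$ is Alster, and its Lindel\"ofness then follows by your own second reduction. Carrying the Alster property --- not Lindel\"ofness --- as the inductive invariant is exactly the coherence condition your proposed fusion needs and lacks; as written, your proposal identifies the crux of Alster's theorem but does not prove it.
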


\begin{lem}\label{Lem4a}
$\aleph_2^{\aleph_0} = \aleph_2$ implies if $w(X) \leq \aleph_2$ and $A(X) = \aleph_2$, then $X$ has a sKinny subspace of size $\aleph_2$.
\end{lem}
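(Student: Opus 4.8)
The plan is to produce the sKinny subspace directly as a set $Y=\set{y_\xi:\xi<\aleph_2}$ of $\aleph_2$ distinct points satisfying $\abs{Y\cap K}\le\aleph_1$ for every compact $K\subseteq X$; since $\abs{Y}=\aleph_2$ this is exactly sKinniness. First I would extract a convenient witness to $A(X)=\aleph_2$. Because $A(X)>\aleph_1$ there is a $k$-cover of $X$ by $G_\delta$'s with no sub-$k$-cover of size $\le\aleph_1$, and because $A(X)\le\aleph_2$ that cover has a sub-$k$-cover of size $\le\aleph_2$; so I may fix $\mc G=\set{G_\alpha:\alpha<\aleph_2}$, a $G_\delta$ $k$-cover with no sub-$k$-cover of size $\le\aleph_1$. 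Using $w(X)\le\aleph_2$ together with $\aleph_2^{\aleph_0}=\aleph_2$ I would moreover arrange that each $G_\alpha$ is \emph{canonical}, i.e.\ of the form $\bigcap_n V_n$ with every $V_n$ a finite union of members of a fixed base $\mc B$ with $\abs{\mc B}\le\aleph_2$. This is harmless: replacing the cover by the family of all canonical $G_\delta$'s lying inside one of its members still yields a $k$-cover (any compact set inside an open set lies, by compactness, inside a finite union of basic sets, hence inside a refining canonical $G_\delta$), it still has no sub-$k$-cover of size $\le\aleph_1$ (a small one would lift by choosing supersets), and there are only $\aleph_2^{\aleph_0}=\aleph_2$ canonical $G_\delta$'s in all.

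Next I would fix a continuous $\in$-increasing chain $\langle M_\xi:\xi<\aleph_2\rangle$ of elementary submodels of some $H(\theta)$, each of size $\aleph_1$, with $\mc G,\mc B,X$ and $\aleph_1+1$ in $M_0$ and $M_\xi\in M_{\xi+1}$, chosen (here $\aleph_2^{\aleph_0}=\aleph_2$ is used) so that $M:=\bigcup_\xi M_\xi$ is $\omega$-closed and $\aleph_2\subseteq M$; in particular every $\alpha<\aleph_2$ enters some $M_\xi$. At stage $\xi$ the family $\set{G_\alpha:\alpha\in M_\xi\cap\aleph_2}$ has size $\le\aleph_1$, hence is not a $k$-cover, and I want to choose a point $y_\xi\in M_{\xi+1}$ lying \emph{outside} $\bigcup\set{G_\alpha:\alpha\in M_\xi}$. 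The previously chosen $y_\eta\ (\eta<\xi)$ lie in $M_\xi$ and hence in some $G_\alpha$ with $\alpha\in M_\xi$, so the $y_\xi$ are automatically distinct. Granting that such a $y_\xi$ exists at every stage, sKinniness is then immediate: given compact $K$, choose $\beta$ with $K\subseteq G_\beta$ and let $\xi_0$ be the stage at which $\beta$ enters $M$; for every $\xi\ge\xi_0$ we have $G_\beta\in\set{G_\alpha:\alpha\in M_\xi}$, so $y_\xi\notin G_\beta\supseteq K$, whence $Y\cap K\subseteq\set{y_\eta:\eta<\xi_0}$ and $\abs{Y\cap K}\le\aleph_1$.

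The hard part is exactly this escape step: I must guarantee that $\bigcup\set{G_\alpha:\alpha\in M_\xi}\neq X$ at every stage, equivalently that no $\aleph_1$-sized subfamily of the witness $\mc G$ covers $X$ pointwise. Note that the construction in fact forces the stronger conclusion $\abs{Y\cap G_\beta}\le\aleph_1$ for every $\beta$, which collapses precisely when some $\le\aleph_1$ members of $\mc G$ cover $X$ pointwise, so everything rests on being able to choose the witness so that this never happens; and since $X$ being a union of $\le\aleph_1$ compacta would (by a pigeonhole on any $\aleph_2$-set) preclude any sKinny subspace of size $\aleph_2$, some such covering dichotomy must genuinely follow from the hypotheses. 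This is where canonicity and the cardinal arithmetic are meant to do the real work: if $\set{G_\alpha:\alpha\in A}$ with $\abs A\le\aleph_1$ covered $X$ pointwise, then for any compact $K$ one applies compactness level by level to the open covers $\set{V_n^\alpha:\alpha\in A}$ of $K$ to obtain finite $F_n\subseteq A$ with $K\subseteq\bigcap_n\bigcup_{\alpha\in F_n}V_n^\alpha$, again a canonical $G_\delta$; the aim is to push this through a submodel of size $\aleph_1$ containing $A$, and to count the resulting canonical supersets via $\aleph_2^{\aleph_0}=\aleph_2$ and $w(X)\le\aleph_2$, so as to manufacture from $A$ a sub-$k$-cover of size $\le\aleph_1$ and contradict the choice of $\mc G$. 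Verifying this covering dichotomy from $A(X)=\aleph_2$ — reconciling the cheap ``escape'' construction with the fact that it is blocked exactly by the small pointwise covers that $A(X)=\aleph_2$ ought to forbid — is the step I expect to be the main obstacle; the transfinite bookkeeping itself is routine.
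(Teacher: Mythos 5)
There is a genuine gap, and it is exactly the one you flagged as ``the main obstacle'': it arises from a misreading of the definition of $A(X)$. In this paper a ``subcover'' in the definition of Alster degree means a subfamily of the $k$-cover that covers $X$ \emph{pointwise}, not a sub-$k$-cover. Thus $A(X)=\aleph_2>\aleph_1$ directly furnishes a $k$-cover $\mc{G}$ of $X$ by $G_\delta$'s such that no subfamily of size $\leq\aleph_1$ covers $X$ --- which is verbatim the ``covering dichotomy'' you could not derive. Your weaker witness (no sub-$k$-cover of size $\leq\aleph_1$) genuinely does not suffice for the escape step, since an $\aleph_1$-sized subfamily might still cover $X$ pointwise; and your sketched repair cannot close this hole: the canonical $G_\delta$'s $\bigcap_n\bigcup_{\alpha\in F_n}V^\alpha_n$ manufactured from a small pointwise cover need not be \emph{members} of $\mc{G}$, so they never produce the forbidden small sub-$k$-cover, and moreover the number of such sets generated from an $\aleph_1$-sized index set is $\aleph_1^{\aleph_0}$, which under the hypothesis $\aleph_2^{\aleph_0}=\aleph_2$ may well be $\aleph_2$ (CH is not assumed), so the counting step fails too.

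With the definition read correctly, the rest of what you wrote collapses to the paper's actual proof, and the elementary-submodel chain is unnecessary overhead. The cardinal arithmetic is used exactly as in your canonical-$G_\delta$ reduction: $w(X)\leq\aleph_2$ and $\aleph_2^{\aleph_0}=\aleph_2$ let one assume $\mc{G}=\set{G_\alpha : \alpha<\omega_2}$. Then recursively pick $x_\alpha\in X-\left(\bigcup_{\beta<\alpha}G_\beta\cup\set{x_\beta:\beta<\alpha}\right)$; if the construction halted at some $\gamma<\omega_2$, then $\set{G_\beta:\beta<\gamma}$ together with, for each $\beta<\gamma$, one member of $\mc{G}$ containing $x_\beta$ would be a pointwise subcover of size $\leq\aleph_1$, contradicting the witness. sKinniness then follows exactly as in your final display: any compact $K$ lies in some $G_\beta$, so $\set{x_\alpha:\alpha<\omega_2}\cap K\subseteq\set{x_\alpha:\alpha\leq\beta}$ has size $\leq\aleph_1$. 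So your transfinite construction and your sKinniness verification are sound and match the paper; the missing ingredient was not a new lemma but the observation that the needed dichotomy is the definition of $A(X)>\aleph_1$ itself.
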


\begin{proof}
Let $\mathcal{G}$ be a $k$-cover of $X$ by $G_\delta$'s which has no subcover of size $\leq \aleph_1$.  By hypothesis we may assume that $\mathcal{G} = \set{G_\alpha}_{\alpha < \omega_2}$.  Pick $x_\alpha \in X - \left(\bigcup_{\beta < \alpha}G_\beta \cup \set{x_\beta : \beta < \alpha}\right)$.  This defines $A = \set{x_\alpha : \alpha < \omega_2}$, for if the construction stopped at $\gamma < \omega_2$, by taking $\set{G_\beta : \beta < \gamma}$ together with a member of $\mathcal{G}$ containing $x_\beta$, for each $\beta < \gamma$, we would obtain a subcover of $\mathcal{G}$ of size $\leq \aleph_1$, contradiction.  $A$ is sKinny since $\mathcal{G}$ is a $k$-cover.
\end{proof}

\begin{thm}\label{Thm4a}
If $X$ is Lindel\"of, and if $T(X) \leq \aleph_1$ and $X$ has a sKinny subspace of size $\aleph_2$, then $X$ is not $\aleph_1$-L-productive.
\end{thm}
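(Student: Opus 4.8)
The plan is to produce a single Tychonoff space $Z$ with $L(Z)\le\aleph_1$ for which $L(X\times Z)\ge\aleph_2$; such a $Z$ immediately witnesses that $X$ is not $\aleph_1$-L-productive. All the work goes into the choice of $Z$, which I would build inside $\beta X$ using the remainder. Since $X$ is Tychonoff I may form $\beta X$, and the type hypothesis together with the equivalence $T(X)=L(\beta X - X)$ tells me that $R:=\beta X\setminus X$ satisfies $L(R)\le\aleph_1$. Writing the sKinny subspace as $Y$ with $|Y|=\aleph_2$, I would let $Z$ be the subset $Y\cup R$ of $\beta X$ retopologized so that every point of $Y$ is isolated, while each point of $R$ keeps the traces on $Y\cup R$ of its $\beta X$-neighbourhoods. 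A routine check shows that such a refinement of a subspace of $\beta X$, obtained by isolating a set of points, is again Tychonoff.

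To see that the product is large, I would show that the diagonal $\Delta=\{(y,y):y\in Y\}$ is a closed discrete subspace of $X\times Z$ of size $\aleph_2$, which forces $L(X\times Z)\ge\aleph_2>\aleph_1$. Discreteness is immediate because each $y$ is isolated in $Z$, so $U\times\{y\}$ isolates $(y,y)$ for any $X$-open $U\ni y$. For closedness I would take $(x,z)\notin\Delta$ and separate it from $\Delta$ by an open box. The case $z\in Y$ is handled by Hausdorffness of $X$: as $x\neq z$, disjoint $X$-open sets $U\ni x$, $U'\ni z$ give the box $U\times\{z\}$. The case $z\in R$ is the crux: using regularity of $\beta X$ I choose a $\beta X$-open $O\ni z$ with $x\notin\overline{O}^{\beta X}$, and take $U=X\setminus\overline{O}^{\beta X}$ and $V=O\cap(Y\cup R)$; since $V\cap Y=O\cap Y$ is disjoint from $U$, the box $U\times V$ misses $\Delta$.

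For $L(Z)\le\aleph_1$, given a cover $\mathcal U$ of $Z$ I would first cover $R$ by at most $\aleph_1$ members of $\mathcal U$, which is possible because $R$ carries its $\beta X$-subspace topology inside $Z$ (isolating $Y$ does not change neighbourhood traces on $R$) and $L(R)\le\aleph_1$. Let $C$ be the set not yet covered; then $C\subseteq Y$ and $C$ is closed in $Z$. The key observation is that closedness of $C$ in $Z$ forces $\overline{C}^{\beta X}$ to miss $R$: any $r\in R\cap\overline{C}^{\beta X}$ would be a $Z$-limit of $C$, contradicting $\overline{C}^{Z}=C\subseteq Y$. Hence $\overline{C}^{\beta X}$ is a compact subset of $X$, and sKinniness gives $|C|\le|Y\cap\overline{C}^{\beta X}|<|Y|=\aleph_2$, i.e.\ $|C|\le\aleph_1$. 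Covering these finitely-or-$\aleph_1$-many isolated points one at a time yields a subcover of size $\le\aleph_1$.

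The main obstacle, and the reason for working inside $\beta X$, is arranging that the limit points added to the isolated copy of $Y$ all lie off $X$, namely in $R$. This single idea does double duty: it is what makes $\Delta$ closed (second-coordinate accumulation of $Y$ occurs only in $R$, and so can never coincide with a first coordinate $x\in X$), and it is what makes the leftover set $C$ have compact-in-$X$ closure, so that sKinniness can be applied. The type hypothesis enters precisely to guarantee that this remainder is small enough ($L(R)\le\aleph_1$) to be absorbed into an $\aleph_1$-sized subcover, and the only genuinely delicate verifications are the two closure computations in the second and third paragraphs.
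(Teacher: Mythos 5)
Your proof is correct, and it is essentially the paper's own argument: the paper proves Theorem \ref{Thm4a} by citing the ``straightforward generalization of Theorem \ref{Thm2} and Corollary \ref{Cor3}'' whose proofs in \cite{AAJT} use exactly your construction --- the space $Z=(\beta X - X)\cup Y$ with $Y$ isolated and the remainder keeping its $\beta X$-neighbourhood traces, a closed discrete diagonal of size $\aleph_2$ in $X\times Z$, and the sKinny/compact-closure computation to bound $L(Z)$ by $\aleph_1$. You have simply carried out in full the one-cardinal-up generalization that the paper delegates to \cite{AAJT}, including the two closure verifications, and all of your steps check out.
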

\begin{proof}
This is accomplished by a straightforward generalization of Theorem \ref{Thm2} and Corollary \ref{Cor3}.  See \cite{AAJT} for their proofs.
\end{proof}

\begin{thm}\label{Thm7a}
If CH and $2^{\aleph_1} = \aleph_2$, then every Lindel\"of\\$\leq \aleph_1$-L-productive space with $T \leq \aleph_1$ and $w \leq \aleph_2$ is powerfully Lindel\"of.
\end{thm}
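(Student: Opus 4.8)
The plan is to prove that $X$ is Alster and then invoke Theorem \ref{ThmA} to conclude that $X^\omega$ is Lindel\"of. First note that being $\leq\aleph_1$-L-productive is genuinely strong: taking $Y$ with $L(Y)=\aleph_0$ shows $X$ is productively Lindel\"of, while allowing $\aleph_0\le L(Y)\le\aleph_1$ shows $X$ is $\aleph_1$-L-productive. Next, CH together with $2^{\aleph_1}=\aleph_2$ gives $\aleph_2^{\aleph_0}=(2^{\aleph_1})^{\aleph_0}=2^{\aleph_1}=\aleph_2$. Exactly as in the proof of Lemma \ref{Lem4a}, this together with $w(X)\le\aleph_2$ guarantees that every $G_\delta$ $k$-cover of $X$ may be taken to have size $\le\aleph_2$, so $A(X)\le\aleph_2$. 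It therefore suffices to rule out $A(X)=\aleph_2$ and $A(X)=\aleph_1$.

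Suppose $A(X)=\aleph_2$. Then $\aleph_2^{\aleph_0}=\aleph_2$ and $w(X)\le\aleph_2$ place us in the hypotheses of Lemma \ref{Lem4a}, which yields a sKinny subspace of $X$ of size $\aleph_2$. Since $X$ is Lindel\"of and $T(X)\le\aleph_1$, Theorem \ref{Thm4a} then says $X$ is not $\aleph_1$-L-productive, contradicting the first paragraph. Hence $A(X)\le\aleph_1$.

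The remaining, and genuinely harder, task is to exclude $A(X)=\aleph_1$; here the full strength of productive Lindel\"ofness and of CH (via Lemma \ref{Lem4Prea}) enters. Assume $A(X)=\aleph_1$ and fix a $G_\delta$ $k$-cover $\mc G=\set{G_\alpha:\alpha<\omega_1}$ with no countable sub-$k$-cover; such a $\mc G$ of size $\aleph_1$ exists since, by the previous paragraph, every $G_\delta$ $k$-cover already admits a sub-$k$-cover of size $\le\aleph_1$. Embedding $X$ in $[0,1]^{w(X)}$, each $G_\alpha$ depends on only countably many coordinates, so all of them depend on a set $S$ of at most $\aleph_1$ coordinates. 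Let $f\colon X\to X'$ be the projection to the $S$-coordinates, with $X'=f(X)$. Then $w(X')\le\aleph_1$, $X'$ is a continuous image of $X$ and hence productively Lindel\"of, and $G_\alpha=f^{-1}(G_\alpha')$ for a canonical $G_\delta$ set $G_\alpha'\subseteq X'$. The point of passing to $X'$ is that sub-$k$-covers pull back cleanly: if some countable $\set{G_{\alpha_n}':n<\omega}$ were a $k$-cover of $X'$, then for any compact $K\subseteq X$ the image $f(K)$ is compact, so $f(K)\subseteq G_{\alpha_n}'$ for some $n$, whence $K\subseteq f^{-1}(G_{\alpha_n}')=G_{\alpha_n}$; thus $\set{G_{\alpha_n}:n<\omega}$ would be a countable sub-$k$-cover of $\mc G$, the desired contradiction. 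Since $X'$ is productively Lindel\"of, regular, and of weight $\le\aleph_1$, under CH the argument behind Lemma \ref{Lem4Prea} in fact renders $X'$ Alster, so it is enough to see that $\set{G_\alpha':\alpha<\omega_1}$ is a $G_\delta$ $k$-cover of $X'$.

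I expect this last point to be the main obstacle. The family $\set{G_\alpha'}$ is readily seen to cover $X'$ in the ordinary sense — for compact $K'\subseteq X'$ the preimage $f^{-1}(K')$ is a closed, hence Lindel\"of, subset of $X$, covered by $\mc G$ — but a projection need not be perfect, so $f^{-1}(K')$ need not be compact and $K'$ need not lie inside a single $G_\alpha'$. Overcoming this requires choosing $S$ so that $f$ reflects the compact subsets of $X'$ faithfully (using that $X$ is Lindel\"of and $T(X)\le\aleph_1$ to force $f^{-1}(K')$, or an enlargement of it, into a single $G_\alpha$), or, equivalently, replacing the weight-reduction step by a type-reduction so that an $\aleph_1$-level analogue of Lemma \ref{Lem4a} and Corollary \ref{Cor3} applies directly. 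Once the projected family is known to be a genuine $k$-cover, Alsterness of $X'$ and the pull-back argument close the case $A(X)=\aleph_1$, and with it the theorem.
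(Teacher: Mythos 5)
Your case analysis on $A(X)$ is the right skeleton, and two of the three cases match the paper: $A(X)=\aleph_0$ is handled by Theorem \ref{ThmA}, and $A(X)=\aleph_2$ is eliminated exactly as the paper does, via Lemma \ref{Lem4a} and Theorem \ref{Thm4a} (your cardinal arithmetic $\aleph_2^{\aleph_0}=\aleph_2$ from $2^{\aleph_1}=\aleph_2$ is also correct). But there is a genuine gap in the case $A(X)=\aleph_1$, and it is the gap you yourself flag: your weight-reduction scheme needs the projected family $\set{G'_\alpha : \alpha<\omega_1}$ to be a $k$-cover of $X'$, and nothing forces this. The projection $f$ is not perfect, a compact $K'\subseteq X'$ has closed but typically non-compact preimage, and there is no reason $f^{-1}(K')$ lies inside a single $G_\alpha$; your suggested repairs (choosing $S$ to ``reflect'' compact sets, or a type-reduction) are unsubstantiated hopes, not arguments. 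Worse, the overall strategy is misdirected: you are trying to prove outright that $A(X)=\aleph_1$ is contradictory, i.e., that $X$ is Alster --- a far stronger statement than the theorem requires, and one the paper makes no attempt at (whether productively Lindel\"of spaces must be Alster is, in general, open).

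The missing idea is that the case $A(X)=\aleph_1$ should be \emph{handled}, not \emph{excluded}. The paper does this in two steps: first, a graded version of Theorem \ref{ThmA} --- repeating Alster's proof verbatim one cardinal up shows that $A(X)\leq\aleph_1$ yields $L(X^\omega)\leq\aleph_1$ (not Lindel\"ofness of $X^\omega$, but a bound on its Lindel\"of number); second, Lemma \ref{Lem8a} of Barr--Tall, which says that under CH a productively Lindel\"of space with $L(X^\omega)\leq\aleph_1$ is powerfully Lindel\"of. Since your $X$ is productively Lindel\"of (as you correctly note, $\leq\aleph_1$-L-productivity gives this), these two facts close the middle case immediately, with no weight reduction and no need for the projected cover to be a $k$-cover. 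Without Lemma \ref{Lem8a} (which, to be fair, is stated inside the paper's proof and so was invisible to you) or some substitute for it, your proof does not go through.
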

\begin{proof}
If $A(X) = \aleph_0$, then $X^\omega$ is Lindel\"of by Theorem \ref{ThmA}.  If $A(X) = \aleph_1$, then $L(X^{\omega}) \leq \aleph_1$ by repeating the proof of Theorem \ref{ThmA} in \cite{A}.  But we have:
\begin{lem}[\cite{BT}]\label{Lem8a}
CH implies that if $X$ is productively Lindel\"of and $L(X^\omega) \leq \aleph_1$, then $X$ is powerfully Lindel\"of.
\end{lem}
Finally, if $A(X) = \aleph_2$, then $X$ has a sKinny subspace of size $\aleph_2$ by Lemma \ref{Lem4a}.  Then by Theorem \ref{Thm4a}, $X$ is not $\aleph_1$-L-productive, a contradiction.
\end{proof}

Unfortunately, we do not know how to generalize Theorem \ref{Thm7a} to higher weights, even for spaces of countable type, because of the dependence of the proof of Lemma \ref{Lem8a} on Lemma \ref{Lem4Prea}.  However, we do have a variation of Theorem \ref{Thm7a}:

\begin{thm}
Suppose CH and $2^{\aleph_1} = \aleph_2$.  Then every Lindel\"of $\leq \aleph_1$-L-productive space with $T \leq \aleph_1$ and size $\leq \aleph_2$ is powerfully Lindel\"of.
\end{thm}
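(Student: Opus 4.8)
The plan is to repeat, case by case on the Alster degree $A(X)$, the argument of Theorem \ref{Thm7a}, supplying a size-based replacement for Lemma \ref{Lem4a}. The first point is that here $A(X)\le\aleph_2$ for a trivial reason, namely $A(X)\le|X|$: given any $k$-cover of $X$ by $G_\delta$'s, selecting for each point a single member containing it produces an (ordinary) subcover of size $\le|X|\le\aleph_2$. Hence $A(X)\in\{\aleph_0,\aleph_1,\aleph_2\}$. If $A(X)=\aleph_0$ then $X$ is Alster and so powerfully Lindel\"of by Theorem \ref{ThmA}. If $A(X)=\aleph_1$, then repeating the proof of Theorem \ref{ThmA} gives $L(X^\omega)\le\aleph_1$; since a $\le\aleph_1$-L-productive Lindel\"of space is in particular productively Lindel\"of (apply the definition to a $Y$ with $L(Y)=\aleph_0$), Lemma \ref{Lem8a} together with CH yields that $X$ is powerfully Lindel\"of. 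These two cases are verbatim as in Theorem \ref{Thm7a} and use neither the weight nor the size hypothesis.

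It remains to exclude $A(X)=\aleph_2$. As in Theorem \ref{Thm7a} I would derive a contradiction by exhibiting a sKinny subspace of size $\aleph_2$ and then applying Theorem \ref{Thm4a} (whose hypotheses, $X$ Lindel\"of with $T(X)\le\aleph_1$, are exactly our standing assumptions) to conclude that $X$ is not $\aleph_1$-L-productive, contrary to $\le\aleph_1$-L-productivity. The sKinny set is built as in Lemma \ref{Lem4a}: from a $G_\delta$ $k$-cover $\mathcal{G}$ with no subcover of size $\le\aleph_1$ (which exists because $A(X)>\aleph_1$) one recursively chooses $x_\alpha\in X\setminus\bigl(\bigcup_{\beta<\alpha}G_\beta\cup\{x_\beta:\beta<\alpha\}\bigr)$ and $G_\alpha\in\mathcal{G}$ with $x_\alpha\in G_\alpha$; the recursion runs through all $\alpha<\omega_2$ since $\le\aleph_1$ of the $G_\beta$ never cover $X$, and the resulting $A=\{x_\alpha:\alpha<\omega_2\}$ meets each chosen $G_\gamma$ in $\{x_\alpha:\alpha\le\gamma\}$, of size $\le\aleph_1$. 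For $A$ to be sKinny one needs every compact $K$ to sit inside a chosen $G_\gamma$, i.e. the chosen members must form a $k$-cover; in Lemma \ref{Lem4a} this was automatic because $w(X)\le\aleph_2$ forces $|\mathcal{G}|\le\aleph_2^{\aleph_0}=\aleph_2$, so $\mathcal{G}$ is enumerated in $\omega_2$ steps and every member is chosen. Under the present hypotheses $w(X)$ may exceed $\aleph_2$, so $\mathcal{G}$ may be larger than $\aleph_2$; reducing it to size $\le\aleph_2$ is the crux.

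To obtain a bad $k$-cover of size $\le\aleph_2$ I would reflect. From $2^{\aleph_1}=\aleph_2$ one gets $\aleph_2^{\aleph_1}=\aleph_2$, so there is $M\prec H(\theta)$ with $|M|=\aleph_2$, $X\subseteq M$, and ${}^{\aleph_1}M\subseteq M$ (build $M$ as the union of an $\omega_2$-chain of size-$\aleph_2$ elementary submodels, each absorbing all $\aleph_1$-sequences from its predecessor; any $\aleph_1$-sequence from $M$ has range inside one link, as $\aleph_1<\mathrm{cf}(\omega_2)$). Place $\mathcal{G}$, a type-witnessing assignment $L\mapsto(K_L,\mathcal{B}_{K_L})$ with each $K_L\supseteq L$ compact and $|\mathcal{B}_{K_L}|\le\aleph_1$, and a network $\mathcal{N}$ for $X$ of size $\le|X|\le\aleph_2$ into $M$, and set $\mathcal{G}^{*}=\mathcal{G}\cap M$. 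Then $\mathcal{G}^{*}$ is an ordinary cover of $X$ of size $\le\aleph_2$ with no subcover of size $\le\aleph_1$, so the recursion above applied to $\mathcal{G}^{*}$ yields $A$ of size $\aleph_2$ with $|A\cap G|\le\aleph_1$ for every $G\in\mathcal{G}^{*}$. The main obstacle — and the sole use of $T(X)\le\aleph_1$ — is to verify that $\mathcal{G}^{*}$ is a genuine $k$-cover, i.e. that every compact $K$ (possibly of size $\aleph_2$ and not in $M$) lies in some member of $\mathcal{G}\cap M$. The intended mechanism is to enclose $K$ in a nice compact $K^{*}=\bigcap\mathcal{B}_{K^{*}}$ of character $\le\aleph_1$ and, using that $K^{*}$ is determined by the $\le\aleph_1$-sized family $\mathcal{B}_{K^{*}}$ together with the ${}^{\aleph_1}$-closure of $M$, the network $\mathcal{N}\in M$, and the type-witness in $M$, to force such an enclosing $K^{*}$ and its covering $G_\delta$ into $M$, so that $K\subseteq K^{*}\subseteq G\in\mathcal{G}\cap M$ by elementarity. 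Carrying out this reflection of $\mathcal{B}_{K^{*}}$ through the network rather than a (possibly too large) global basis for $X$ is the delicate point on which the whole argument turns.
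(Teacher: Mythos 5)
You have correctly reduced the problem to the case $A(X)=\aleph_2$: the bound $A(X)\le|X|$ is right (singletons are compact, so choosing one member of the $k$-cover per point yields a subcover of size $\le|X|$), the cases $A(X)\in\{\aleph_0,\aleph_1\}$ run verbatim as in Theorem \ref{Thm7a}, and your construction of a ${}^{\aleph_1}$-closed $M\prec H(\theta)$ of size $\aleph_2$ from $\aleph_2^{\aleph_1}=\aleph_2$ is standard and correct. But the proof is incomplete at exactly the point you flag yourself: you never establish that $\mathcal{G}\cap M$ is a $k$-cover, and the mechanism you sketch does not work as stated. The enclosing compact $K^{*}$ supplied by $T(X)\le\aleph_1$ is determined by its outer base $\mathcal{B}_{K^{*}}$, but the members of $\mathcal{B}_{K^{*}}$ are open subsets of $X$ of size possibly $\aleph_2$; they are \emph{subsets} of $M$ without being \emph{elements} of $M$, and ${}^{\aleph_1}$-closure only forces into $M$ objects coded by $\aleph_1$-sized sets of elements of $M$. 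The network you propose to route the coding through is of size $\aleph_2$ (under the present hypotheses the only guaranteed network is essentially the family of singletons), so each $B\in\mathcal{B}_{K^{*}}$ corresponds to an $\aleph_2$-sized subfamily of $\mathcal{N}$ and no $\le\aleph_1$-sized code ever materializes. Thus ``force such an enclosing $K^{*}$ and its covering $G_\delta$ into $M$'' is not a step of a proof but the entire missing content — and it is precisely the step where the two new hypotheses ($T\le\aleph_1$ and $|X|\le\aleph_2$) would have to do their work.

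The paper circumvents this compact-set reflection problem rather than attacking it head-on: it takes a countably closed elementary submodel $M$ of $H_\theta$ of size $\aleph_2$ with $X\subseteq M$ and passes to the reflected space $X_M$, invoking the lemma of \cite{JT} that for spaces of countable type $X_M$ is a \emph{perfect} image of $X$ — perfectness of that map is exactly the statement that compact sets reflect correctly — together with the fact that perfect maps preserve countable type. Since $X_M$ is Lindel\"of, $\le\aleph_1$-L-productive (being a continuous onto image), of small type, and of weight $\le|M|=\aleph_2$, the already-proved weight version (Theorem \ref{Thm7a}) applies to $X_M$; finally, $(X^\omega)_M=(X_M)^\omega$ and the countable closure of $M$ lift ``powerfully Lindel\"of'' from $X_M$ back to $X^\omega$. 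In short: the paper reflects the space, not the $k$-cover. If you wish to salvage your outline, the gap in your $A(X)=\aleph_2$ case is essentially the content of that perfect-image lemma, and without proving or citing something of that strength your argument does not go through.
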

\begin{proof}
Take a countably closed elementary submodel $M$ of $H_{\theta}$ of size $\aleph_2$, $\theta$ sufficiently large and regular, with $X$ and its topology in $M$.  Without loss of generality, assume $X \subseteq M$.  $X_M$ \cite{JT} is the topology on $X \cap M$ generated by $\set{U \cap M : U \in M, U \text{ open in } X}$.  In the special case of $X \subseteq M$, $X_M$ is just a weaker topology on $X$.
\begin{lem}[\cite{JT}]
For $X$ of countable type, $X_M$ is a perfect image of a subspace of $X$.  Furthermore, each $x \in X_M$ is a member of its pre-image.
\end{lem}

It follows that if $X$ is of countable type and $X \subseteq M$, then $X_M$ is a perfect image of $X$.

\begin{lem}
Perfect maps preserve countable type.
\end{lem}

This is probably due to Arhangel'ski\u{i}; it is quoted without attribution in \cite{B}.  It follows that a Lindel\"of $\leq \aleph_1$-L-productive space $X$ of countable type and size $\leq \aleph_2$ will map onto a Lindel\"of $\leq \aleph_1$-L-productive $X_M$ of countable type and weight $\leq \aleph_2$, which is then powerfully Lindel\"of.  As in \cite{TT}, we argue that $(X^\omega)_M = (X_M)^\omega$.  If there were an open cover $\mc{U}$ of $X^\omega$ without a countable subcover, there would be one in $M$.  Then $\set{U \cap M : U \in \mc{U} \cap M}$ would cover $(X^\omega)_M$.  Take $\mc{U}' = \set{U_n : n < \omega} \subseteq \mc{U} \cap M$, a countable subcover.  $M$ is countably closed so $\mc{U}' \in M$.  Then $M \models \mc{U}' \text{ covers } X^\omega$, so $\mc{U}'$ does cover $X^\omega$.
\end{proof}


\section{Rothberger and indestructible spaces and the $\aleph_1$-Borel Conjecture}
\begin{defn}
A space is \defined{Rothberger} if for each sequence $\set{\mc{U}_n}_{n < \omega}$ of open covers of $X$, there are $U_n \in \mc{U}_n$ such that $\set{U_n : n < \omega}$ is an open cover.  Equivalently \cite{P}, if Player ONE does not have a winning strategy in the game $\mathbf{G}_1^{\omega}(\mc{O}, \mc{O})$ in which in inning $n$, ONE picks an open cover and Player TWO picks an element of it, with ONE winning if the selections do not form an open cover.  A space is \defined{indestructible} if it generates a Lindel\"of topology in any countably closed forcing extension.  Equivalently \cite{ST} if ONE does not have a winning strategy in the $\omega_1$-length game $\mathbf{G}_1^{\omega_1}(\mc{O}, \mc{O})$ defined analogously to $\mathbf{G}_1^{\omega}(\mc{O}, \mc{O})$.
\end{defn}
\begin{defn}
A space $X$ is \defined{projectively countable} if whenever $f : X \to Y$, $Y$ separable metrizable (equivalently, $Y = \mathbb{R}$ or $Y = [0, 1]^{\omega}$, or etc.), $f(X)$ is countable.  \defined{Projectively $\sigma$-compact} is defined similarly.  $X$ is \defined{projectively $\aleph_1$} if whenever $f : X \to [0,1]^{\omega_1}$, then $\abs{f(X)} \leq \aleph_1$.
\end{defn}
Projectively countable Lindel\"of spaces are Rothberger \cite {K}, \cite{BCM}, \cite{T}; in fact
\begin{prop}[\cite{M}]\label{Prop4}
Borel's Conjecture is equivalent to the assertion that a space is Rothberger if and only if it is projectively countable.
\end{prop}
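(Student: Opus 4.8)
The plan is to read the asserted equivalence as quantified over Lindelöf spaces. This restriction is harmless for the clause "Rothberger $\Rightarrow$ projectively countable" (a Rothberger space is automatically Lindelöf), and it is genuinely needed for the other clause, since $\omega_1$ with its order topology is projectively countable yet not Lindelöf, hence not Rothberger. With that understood, I would first dispose of the implication that does not involve Borel's Conjecture: by \cite{K}, \cite{BCM}, \cite{T}, every projectively countable Lindelöf space is Rothberger, so "projectively countable $\Rightarrow$ Rothberger" holds outright in ZFC. The entire content of the proposition is therefore concentrated in the reverse clause, and the goal reduces to showing that "every (Lindelöf) Rothberger space is projectively countable" is equivalent to BC.

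The next step is a purely formal reduction. Since the Rothberger property is preserved by continuous images and every countable space is trivially Rothberger, the statement "every Rothberger space is projectively countable" is equivalent to the single assertion "every Rothberger separable metrizable space is countable." Indeed, the forward direction applies the hypothesis to $f(X)$ for an arbitrary $f\colon X\to[0,1]^\omega$, noting that $f(X)$ is Rothberger and separable metrizable; the backward direction tests a Rothberger separable metrizable $R$ against its own embedding into $[0,1]^\omega$. So it remains to prove $\mathrm{BC}\Leftrightarrow$ ``every Rothberger separable metrizable space is countable.''

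For the direction $\mathrm{BC}\Rightarrow$ (Rothberger separable metrizable spaces are countable) I would invoke the classical fact that a Rothberger metric space has strong measure zero. A Rothberger separable metrizable space is thus a separable metric space of strong measure zero, which BC (in its form for separable metric spaces, obtained after embedding into $[0,1]^\omega$, and equivalent to the real-line version) declares countable. Combined with the first two paragraphs, this already yields $\mathrm{BC}\Rightarrow(\text{Rothberger}\Leftrightarrow\text{projectively countable})$ for Lindelöf spaces.

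The hard part is the converse, which I would argue contrapositively: assuming $\neg\mathrm{BC}$, I want to produce an uncountable Rothberger separable metrizable space, whose identity map then witnesses that it is Rothberger but not projectively countable, so the equivalence fails. Here lies the main obstacle. Failure of BC only supplies an uncountable strong measure zero set, and for subsets of the reals strong measure zero does \emph{not} imply the Rothberger property — Rothberger's own conjecture that the two coincide is independent of ZFC, so no soft argument converts the given set into a Rothberger one. The crux is therefore the deeper fact (which I would attribute to \cite{M}) that the mere existence of an uncountable strong measure zero set already forces an uncountable Rothberger set to exist; equivalently, that "every Rothberger set of reals is countable" not only follows from but also implies BC. I expect this to require the combinatorial theory relating Rothberger's property to strong measure zero, and it is the step where the real work, and the dependence on \cite{M}, resides.
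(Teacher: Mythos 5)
You should know at the outset that the paper contains no proof of Proposition \ref{Prop4} at all: it is quoted from \cite{M} (Miller) as a known result, so there is no ``paper proof'' to match your argument against. Judged on its own terms, your scaffolding is sound and, in places, more careful than the paper's phrasing. The restriction to Lindel\"of spaces is indeed the right reading (your $\omega_1$ example correctly shows the literal unrestricted statement would be refutable in ZFC, so the equivalence must be interpreted over Lindel\"of spaces, where ``projectively countable $\Rightarrow$ Rothberger'' is the ZFC half cited in \cite{K}, \cite{BCM}, \cite{T}); and the reduction to ``every Rothberger separable metrizable space is countable'' is correct since Rothberger is preserved by continuous maps. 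One gloss in your $\mathrm{BC}\Rightarrow$ direction deserves repair: strong measure zero is a metric, not a topological, property, so you cannot freely transport SMZ between an abstract separable metric space and its homeomorphic copy in $[0,1]^{\omega}$, and ``BC for separable metric spaces'' is not simply a reformulation of the real-line version. The standard fix stays inside your framework: given Rothberger $X\subseteq[0,1]^{\omega}$, each projection $\pi_n(X)\subseteq[0,1]$ is a Rothberger set of reals, hence SMZ, hence countable under BC; then $X$ sits in a countable product of countable spaces, so it is zero-dimensional and embeds in $2^{\omega}\subseteq\mathbb{R}$, and its image --- Rothberger because the property is topological --- is an SMZ set of reals, hence countable under BC. This same trick shows your two formulations (``Rothberger sets of reals are countable'' versus ``Rothberger separable metrizable spaces are countable'') are ZFC-equivalent, which you implicitly conflate.

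The genuine gap is the one you yourself flag: the implication $\neg\mathrm{BC}\Rightarrow$ (there is an uncountable Rothberger separable metrizable space) is asserted and attributed to \cite{M}, but nothing in your proposal establishes it, and it does not follow by any soft manipulation of the uncountable SMZ set that $\neg\mathrm{BC}$ provides --- as you correctly observe, SMZ does not imply Rothberger, and this cannot be patched by cardinal-characteristic reasoning alone (e.g., ``$\operatorname{cov}(\mathcal{M})=\aleph_1$ yields an uncountable Rothberger set'' is false in the Laver model, where BC holds, $\operatorname{cov}(\mathcal{M})=\aleph_1$, and every Rothberger set is countable). That implication is precisely the nontrivial half of Miller's theorem, and deferring it to the cited source means your proposal, read as a self-contained proof, proves only the routine ZFC reductions plus the forward direction. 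To be fair, this puts you at exact parity with the paper, which likewise defers the whole proposition to \cite{M}; your contribution is to have correctly localized where all of the depth resides, but the crux itself remains unproven.
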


Surprisingly, productive Lindel\"ofness can substitute for Borel's Conjecture:

\begin{thm}\label{Thm5}
Suppose $X$ is a productively Lindel\"of Rothberger space.  Then $X$ is projectively countable.
\end{thm}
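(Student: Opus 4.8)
The plan is to argue by contradiction, reducing everything to the behaviour of continuous images in a fixed compact metric space and then invoking Corollary \ref{Cor3}. Suppose $X$ is productively Lindel\"of and Rothberger but not projectively countable, so there is a continuous $f\colon X\to[0,1]^\omega$ with $Y:=f(X)$ uncountable. First I would record that both hypotheses pass to $Y$. For productive Lindel\"ofness: if $Z$ is Lindel\"of then $Y\times Z=(f\times\mathrm{id})(X\times Z)$ is a continuous image of the Lindel\"of space $X\times Z$, hence Lindel\"of. For the Rothberger property: given open covers $\mc{U}_n$ of $Y$, pull them back along $f$, apply the Rothberger selection in $X$, and push the chosen sets forward. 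Thus $Y$ is a productively Lindel\"of, Rothberger, separable metrizable space, so in particular regular, Lindel\"of, and of countable type.

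The heart of the argument is to show that every compact $K\subseteq Y$ is countable. Here I would use that the Rothberger property implies strong measure zero: feeding the cover by balls of radius $\varepsilon_n/2$ into the Rothberger selection shows $Y$ can be covered by sets of $\diam<\varepsilon_n$ for any prescribed sequence $\langle\varepsilon_n\rangle$. Strong measure zero is hereditary and is preserved by uniformly continuous maps. If some compact $K\subseteq Y$ were uncountable it would contain a homeomorphic copy $C$ of the Cantor set $2^\omega$; composing the inverse homeomorphism $C\to 2^\omega$ (uniformly continuous, since $C$ is compact) with the standard surjection $2^\omega\to[0,1]$ would exhibit $[0,1]$ as a uniformly continuous image of the strong measure zero set $C$, forcing $[0,1]$ to have strong measure zero, which is absurd. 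Hence every compact subset of $Y$ is countable.

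Finally I would apply Corollary \ref{Cor3} to $Y$ itself, taking the distinguished uncountable subset to be all of $Y$: the space $Y$ is Lindel\"of, regular, and of countable type, the subset $Y\subseteq Y$ is uncountable, and for every compact $K\subseteq Y$ the intersection $K\cap Y=K$ is countable by the previous paragraph. Corollary \ref{Cor3} then yields that $Y$ is not productively Lindel\"of, contradicting the first paragraph. I expect the main obstacle to be the middle step---pinning down that compact subsets of a Rothberger space are countable---since it is the only place where the Rothberger hypothesis does real work and where one must combine strong measure zero with the perfect set property of uncountable compacta; the two preservation facts and the appeal to Corollary \ref{Cor3} are then routine. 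This also clarifies the role of Proposition \ref{Prop4}: productive Lindel\"ofness acts as a ZFC substitute for Borel's Conjecture precisely because, via Corollary \ref{Cor3}, it rules out uncountable Rothberger images whose compact subsets are all countable.
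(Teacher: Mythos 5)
Your proof is correct and takes essentially the same route as the paper: reduce to the image $Y=f(X)$, which inherits both hypotheses, show every compact subset of $Y$ is countable because an uncountable one would contain a copy of the Cantor set incompatible with the Rothberger property, and then apply Corollary \ref{Cor3}. The only cosmetic difference is that you certify the Cantor-set step via strong measure zero in the image, while the paper pulls back to a closed (hence Rothberger) subspace of $X$ mapping onto the Cantor set, which is not Rothberger.
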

\begin{proof}
By Corollary \ref{Cor3}, it suffices to show that if $f : X \to \mathbb{R}$, then compact subspaces of $f(X)$ are countable.  If such a subspace were uncountable, it would include a perfect subset and hence a copy of the Cantor set.  But then a closed, hence Rothberger subset of $X$ would map onto the Cantor set, which is not Rothberger.
\end{proof}

Since indestructibility is the game version of Rothberger up one cardinal, it is reasonable to see whether Proposition \ref{Prop4} and Theorem \ref{Thm5} have generalizations to indestructibility.  One difficulty we should first dispose of is the question of whether indestructibility is the right generalization of Rothberger for this context, or whether it is more appropriate to consider the selection principle variation:

\begin{defn}
A space is \defined{$\omega_1$-Rothberger} if whenever $\set{\mc{U}_\alpha}_{\alpha < \omega_1}$ are open covers, there is a selection $U_\alpha \in \mc{U}_\alpha$, $\alpha < \omega_1$, such that $\bigcup\set{U_\alpha : \alpha < \omega_1}$ is a cover.
\end{defn}

In \cite{ST}, Scheepers and Tall ask whether $\omega_1$-Rothberger is the same as indestructible.  The latter easily implies the former but Dias and Tall \cite{D} exhibit a destructible Lindel\"of space which, under CH, is $\omega_1$-Rothberger.

\begin{ex}\label{DExample}
The lexicographic order topology on $2^{\omega_1}$ is a compact destructible (see \cite{D}) space of size $2^{\aleph_1}$ and weight $2^{\aleph_0}$, with no isolated points, which does not include a copy of $2^{\omega_1}$ and indeed does not even have a closed subset mapping onto $2^{\omega_1}$.  CH implies the space is $\omega_1$-Rothberger \cite{D}.
\end{ex}

Under CH then, there is a space which is $\omega_1$-Rothberger but also not projectively $\aleph_1$.  Thus ``indestructibility" is the appropriate generalization of ``Rothberger" to use in attempting to generalize Proposition \ref{Prop4}.  Let us make the following definition:

\begin{defn}[{\cite{TU}}]
The \defined{$\mathbf{\aleph_1}$-Borel Conjecture} is the assertion that a Lindel\"of space is indestructible if and only if it is projectively $\aleph_1$.
\end{defn}

There have been several quite different attempts to generalize Borel's Conjecture - see \cite{C}, \cite{GS}, \cite{HS}.

\begin{prop}[\cite{TU}]\label{Prop6}
L\'evy-collapse an inaccessible cardinal to $\aleph_2$.  Then CH and the $\aleph_1$-Borel Conjecture hold.
\end{prop}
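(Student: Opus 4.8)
The plan is to establish the two conjuncts separately, treating the cardinal arithmetic as routine bookkeeping and placing all the weight on the $\aleph_1$-Borel Conjecture. Write $P = \mathrm{Coll}(\omega_1, {<}\kappa)$ for the Lévy collapse of the inaccessible $\kappa$, and work in $V[G]$ for $G$ that is $P$-generic. Since $P$ is countably closed it adds no reals, and since $\kappa$ is inaccessible $P$ is $\kappa$-cc and collapses every cardinal in the interval $(\omega_1,\kappa)$ while preserving $\kappa = \aleph_2$. Hence $2^{\aleph_0} \leq \aleph_1$ in $V[G]$, i.e. CH holds; the same counting of nice names, using $\kappa$-cc and inaccessibility, yields $2^{\aleph_1} = \aleph_2$. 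This disposes of the first assertion and, incidentally, supplies the hypotheses of the earlier theorems.

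For the $\aleph_1$-Borel Conjecture I would prove the two implications in turn. The implication ``projectively $\aleph_1$ $\Rightarrow$ indestructible'' is the one-cardinal-up analogue of the ZFC fact, cited before Proposition \ref{Prop4}, that projectively countable Lindel\"of spaces are Rothberger. I would lift that argument by replacing the length-$\omega$ selection game with $\mathbf{G}_1^{\omega_1}(\mc{O},\mc{O})$ and the target $\mathbb{R}$ (or $[0,1]^\omega$) with $[0,1]^{\omega_1}$, encoding a putative winning strategy for ONE in the $\omega_1$-game as a continuous map into $[0,1]^{\omega_1}$ whose image, being of size $\leq \aleph_1$, furnishes the covering selection that defeats ONE. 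This direction should go through using only CH, exactly as in the Rothberger case.

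The substantive direction is ``indestructible $\Rightarrow$ projectively $\aleph_1$,'' and here I would argue contrapositively. Suppose $f : X \to [0,1]^{\omega_1}$ with $|f(X)| \geq \aleph_2$. A continuous image of an indestructible space is indestructible (Lindel\"ofness is preserved by continuous surjections in every extension), and destructibility of the image pulls back through $f^{-1}$ to $X$; so it suffices to show that $Y := f(X) \subseteq [0,1]^{\omega_1}$ with $|Y| \geq \aleph_2 = \kappa$ is destructible. The whole problem thus reduces to: in $V[G]$, every Lindel\"of $Y \subseteq [0,1]^{\omega_1}$ of size at least $\kappa$ becomes non-Lindel\"of in some countably closed forcing extension. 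To produce such a forcing I would exploit the factorisation $P \cong P_\alpha * \dot P^\alpha$, where $P_\alpha = \mathrm{Coll}(\omega_1,{<}\alpha)$ has size less than $\kappa$ and $\dot P^\alpha$ is again a Lévy collapse. By $\kappa$-cc together with the regularity and strong-limit property of $\kappa$, every subset of $\omega_1$ in $V[G]$ — and hence every point of $[0,1]^{\omega_1}$ and every $\aleph_1$-sized family of basic open sets, together with the coordinates it uses — already lies in some intermediate model $V[G_\alpha]$ with $\alpha < \kappa$, where $G_\alpha := G \cap P_\alpha$. Because $|Y| = \kappa$ is regular, points of $Y$ keep appearing cofinally along the chain $\langle V[G_\alpha] : \alpha < \kappa\rangle$, and a point $y^*$ entering at a late stage is fresh over the model in which the moves played so far were assembled. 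Steering the covers so that ONE forces TWO to pin down more and more coordinates — of which only $\aleph_1$ can be decided over $\omega_1$ rounds, while $Y$ has $\aleph_2$ points spread across $\omega_1$ coordinates — should leave such a $y^*$ uncovered, giving a winning strategy for ONE in $\mathbf{G}_1^{\omega_1}(\mc{O},\mc{O})$ on $Y$ (equivalently, a countably closed collapse adding an open cover of $Y$ with no countable subcover), contradicting indestructibility.

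The main obstacle is exactly this last combinatorial step. One must verify that the reflection furnished by the inaccessible is strong enough that the fresh points escape not merely the covers visible at a single stage but every countable subfamily assemblable after the destroying forcing, and that the $\omega_1$-length diagonalisation can be organised as a genuine strategy for ONE, respecting the order of play and TWO's freedom of choice. Getting the interaction between the length-$\omega_1$ game and the stagewise structure of the Lévy collapse precisely right — and confirming that a single countably closed forcing, rather than an arbitrary one, already destroys Lindel\"ofness — is where the real work lies; the inaccessibility of $\kappa$ is invoked exactly to secure the $\kappa$-cc and the $\aleph_1$-sized reflection that make the diagonalisation possible.
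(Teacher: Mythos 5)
The paper offers no proof of this proposition --- it is quoted from \cite{TU} --- so there is no in-paper argument to compare against; judged on its own terms, your proposal gets the routine parts right but is missing its core. The CH computation is correct (inaccessibility gives $2^{\aleph_0}<\kappa$ in $V$, no new reals are added, and the old continuum is collapsed to $\aleph_1$), and the direction ``projectively $\aleph_1$ implies indestructible'' does lift as you describe: under CH the strategy tree for ONE in $\mathbf{G}_1^{\omega_1}(\mc{O},\mc{O})$ has only $\aleph_1$ positions (countable covers and $\omega^{<\omega_1}$ of size $2^{\aleph_0}\cdot\aleph_1=\aleph_1$), giving a continuous map onto a space of weight $\leq\aleph_1$, whose image has size $\leq\aleph_1$ by projective $\aleph_1$-ness, and TWO defeats the transferred strategy by enumerating that image in order type $\omega_1$. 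Your structural facts about the collapse are also correct: conditions are countable hence bounded, $P$ is $\kappa$-cc, so every point of $[0,1]^{\omega_1}$ and every $\aleph_1$-sized object of $V[G]$ lies in some intermediate $V[G_\alpha]$, and since $(2^{\aleph_1})^{V[G_\alpha]}<\kappa$ by inaccessibility, points of an $\aleph_2$-sized $Y$ appear cofinally along the filtration.

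However, the final diagonalisation --- which you candidly flag as ``where the real work lies'' --- is the entire content of the theorem, and the mechanism you sketch does not work as stated, for three concrete reasons. First, the coordinate-counting heuristic is vacuous: $[0,1]^{\omega_1}$ has only $\aleph_1$ coordinates in total, and very few basic open sets (even two) can cover all $\aleph_2$ points, so ``TWO can decide only $\aleph_1$ coordinates over $\omega_1$ rounds'' produces no uncovered point; moreover ONE controls only the covers, and since each cover must genuinely cover $Y$, TWO is always free to select a member whose trace on $Y$ has size $\aleph_2$, so ONE cannot ``force TWO to pin down coordinates.'' Second, freshness is not evasion: a point $y^{*}\notin V[G_\alpha]$ lies in plenty of open sets coded in $V[G_\alpha]$, so being generic over the model in which the position so far was assembled does nothing to keep $y^{*}$ out of TWO's selections; what is needed is a uniform statement about all $\omega_1$-length plays, none of which is visible at any bounded stage at the moment ONE's strategy (a single object of $V[G]$) must commit. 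Third, any soft diagonalisation deducible from the filtration facts you list would prove too much: by Proposition \ref{Thm7}, when $\aleph_2$ is not inaccessible in $L$ the Kurepa line is a \emph{compact indestructible} space of weight $\aleph_1$ and size $>\aleph_1$, so the argument must exploit the inaccessible in a way that rules out Kurepa-type configurations (in the style of Silver's proof that this collapse kills Kurepa trees, via reflection to intermediate models plus the fact that the countably closed tail adds no new ``branches''), rather than via a direct fresh-point steering argument. Establishing this is precisely the content of \cite{TU}, and your proposal leaves it unproved.
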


In fact (see below), the $\aleph_1$-Borel Conjecture implies CH.  The inaccessible is necessary \cite{D}; see below.

It should be straightforward to generalize Theorem \ref{Thm5} (possibly assuming CH) to obtain something like:
\[\tag{$*$}\text{\emph{$\leq \aleph_1$-L-productive indestructible spaces are projectively $\aleph_1$}.}\label{displayEq}\]
In fact, as we shall see below (Corollary \ref{Cor17}), this is consistently false. 

It is instructive to see what happens when one naively tries to prove \eqref{displayEq} by stepping up the proof of Theorem \ref{Thm5} one cardinal, replacing the Cantor set by a copy of $2^{\omega_1}$.  A crucial step in the proof fails: Example \ref{DExample} is a space of size $2^{\aleph_1}$ without isolated points, which does not include a copy of $2^{\omega_1}$, yet under CH has weight $\aleph_1$.  As an ordered space, this space is hereditarily normal, so by \v{S}apirovskii's mapping theorem (see e.g. \cite{J}) cannot have a closed subspace mapping onto $2^{\omega_1}$.

Given that this attempt to generalize the proof of Theorem \ref{Thm5} in order to obtain \eqref{displayEq} fails, is there another way to get it?  Well, of course the $\aleph_1$-Borel Conjecture trivially implies \eqref{displayEq}, but is that extra hypothesis necessary?  It is:

\begin{prop}[\cite{D}]\label{Thm7}
If $\aleph_2$ is not inaccessible in $L$, there is a compact indestructible space of weight $\aleph_1$ and cardinality greater than $\aleph_1$.
\end{prop}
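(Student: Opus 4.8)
The plan is to produce, from the hypothesis, a compact space $X$ of weight $\aleph_1$ and cardinality $\geq \aleph_2$ that remains Lindel\"of in every countably closed forcing extension; such an $X$ is the asserted space. It is worth recording the payoff: any such $X$ is automatically \emph{not} projectively $\aleph_1$, since having weight $\aleph_1$ it embeds in $[0,1]^{\omega_1}$, and the embedding is an injection with image of size $|X| > \aleph_1$. Thus $X$ refutes the forward direction of the $\aleph_1$-Borel Conjecture, which is the point of the Proposition: it shows that the inaccessible of Proposition \ref{Prop6} is genuinely necessary.

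First I would extract the combinatorial object. By a classical theorem (Silver, with the construction in $L$ going back to Jensen's $\diamond^+$), the failure of $\aleph_2$ to be inaccessible in $L$ yields a Kurepa tree, i.e. a tree $T$ of height $\omega_1$ with countable levels and at least $\aleph_2$ cofinal branches, which we may take to be a downward-closed subtree of $2^{<\omega_1}$. I would then manufacture the space as $X=T\cup[T]$, the nodes of $T$ together with its cofinal branches, each branch adjoined as the supremum of its chain, carried with the coarse wedge topology. Since $T$ is rooted and, with the branches adjoined, chain-complete, $X$ is compact; its weight is $|T|=\aleph_1$ (the wedges are indexed by finite subsets of $T$), while $|X|\geq |[T]|\geq \aleph_2>\aleph_1$, as required.

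The heart of the matter, and the step I expect to be the main obstacle, is indestructibility. The key reduction is that, for this space, indestructibility is \emph{equivalent} to the assertion that no countably closed forcing adds a new cofinal branch to $T$. Indeed, suppose $\mathbb{P}$ is countably closed and $b$ is a cofinal branch of $T$ in $V[G]\setminus V$; countable closure forces every proper initial segment $b\uhr\alpha$ ($\alpha<\omega_1$) to lie in $V$, so the chain $\{b\uhr\alpha:\alpha<\omega_1\}\subseteq X^V$ has no supremum in $X^V$, and the complements of the decreasing tower of closed wedges above its nodes form an increasing open cover of $X^V$ of length $\omega_1$ with no countable subcover (a countable subfamily has bounded support and omits the ground-model branches extending past that level). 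Conversely, if no new branch is added, then in $V[G]$ the set $X^V$ is again chain-complete, since a $V[G]$-chain through $T$ is either bounded, with supremum a node, or cofinal, hence a ground-model branch with supremum in $[T]^V$; the coarse wedge compactness criterion, a theorem of $\mathrm{ZFC}$ applied inside $V[G]$, then shows $X^V$ is compact, a fortiori Lindel\"of.

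It therefore remains to arrange that the Kurepa tree admits no new cofinal branch under \emph{any} countably closed forcing, and this is where the hypothesis must be used in earnest rather than merely to assert existence. The subtlety is real: for a normal Kurepa tree the natural node poset is itself countably closed and generically adds a branch, which by the previous paragraph would destroy the space. One must instead build $T$ by a fine-structural construction in $L$, exploiting $\diamond^+$ together with the fact that, $\aleph_2$ being non-inaccessible in $L$, the relevant branches are already realized in $L$, and arranging $T$ to be non-normal precisely enough that no countable increasing chain has a canonical extension to a fresh branch. Verifying this branch-absoluteness against all countably closed forcings is the delicate combinatorial core, and for it I would follow the construction of Dias \cite{D}.
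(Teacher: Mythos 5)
Your reduction of indestructibility to branch-preservation (a new cofinal branch of $T$ yields an $\omega_1$-tower of nonempty closed wedges with empty intersection, hence an increasing open cover of length $\omega_1$ with no countable subcover; conversely, branch-preservation plus chain-completeness gives compactness of $X^V$ in $V[G]$) is sound and is the right engine. But the proposal collapses at exactly the step you flag as its ``delicate combinatorial core,'' and it does so because of a false claim. You assert that for a normal Kurepa tree the node poset is countably closed and generically adds a branch, so that a special fine-structural, non-normal Kurepa tree must be constructed in $L$. Neither half is correct. Forcing with the nodes of a tree of height $\omega_1$ is countably closed only when every countable chain has an upper bound in the tree, and for a tree with enough splitting to be Kurepa this forces some countable level to have $2^{\aleph_0}$ nodes, contradicting countability of levels. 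More decisively, a classical argument of Silver shows that \emph{no} countably closed forcing can add a cofinal branch to \emph{any} tree of height $\omega_1$ with countable levels: if $\dot b$ names a new branch, build conditions $p_s$ for $s\in 2^{<\omega}$ deciding pairwise-splitting initial segments of $\dot b$ up to levels $\gamma_n$, use countable closure to find, for each $x\in 2^\omega$ of the ground model, a lower bound of $\langle p_{x\uhr n}:n<\omega\rangle$, and extend it to decide $\dot b$ at level $\gamma=\sup_n\gamma_n$; this produces $2^{\aleph_0}\geq\aleph_1$ distinct ground-model nodes at the single countable level $\gamma$, a contradiction. Hence \emph{every} Kurepa tree already has the branch-absoluteness you need; the hypothesis that $\aleph_2$ is not inaccessible in $L$ is used exactly once, via Silver's theorem, to obtain a Kurepa tree at all. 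As written, your proof is therefore incomplete precisely where you defer to an unspecified construction motivated by a nonexistent obstacle. (For comparison: the paper does not reprove the proposition --- it cites \cite{D} --- and its proof of Corollary \ref{Cor17} identifies the witness as the compact line obtained from a Kurepa tree as in \cite{To}; your coarse-wedge compactification of $T\cup[T]$ is a reasonable variant of that example.)

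A secondary gap concerns compactness. The coarse wedge topology on a rooted tree is compact if and only if the tree is chain-complete, and adjoining only the \emph{cofinal} branches does not supply least upper bounds for \emph{bounded} chains whose unions are missing from $T$: an arbitrary downward-closed $T\subseteq 2^{<\omega_1}$ need not be closed under unions of bounded chains. You must also adjoin the suprema of bounded chains. This repair is harmless but must be checked: the new nodes all sit at limit levels (so the successor levels, which index the subbasic wedges, stay countable and the weight remains $\aleph_1$), and a new cofinal branch of the completed tree restricts to a new cofinal branch of $T$ itself, so Silver's lemma still applies --- a point that matters because, if CH fails, the completed limit levels can have size $2^{\aleph_0}>\aleph_1$ and one cannot quote the branch lemma for the completed tree directly. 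With these two repairs --- replacing the final paragraph by the Silver branch lemma, and chain-completing $T$ before forming the space --- your outline becomes a correct proof.
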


\begin{cor}\label{Cor17}
If $\aleph_2$ is not inaccessible in $L$, there is an L-productive indestructible space which is not projectively $\aleph_1$.
\end{cor}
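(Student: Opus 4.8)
The plan is to take the space furnished by Proposition \ref{Thm7} and verify that it has all three properties named in the corollary. Since $\aleph_2$ is not inaccessible in $L$, Proposition \ref{Thm7} gives a compact indestructible space $X$ with $w(X) \leq \aleph_1$ and $\abs{X} > \aleph_1$. Indestructibility is immediate from the choice of $X$, so the two remaining tasks are to show that $X$ is L-productive and that $X$ is not projectively $\aleph_1$.

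First I would record that every compact space is L-productive. Given any space $Y$, the projection $X \times Y \to Y$ is a perfect map, being closed with compact fibres (this is the tube lemma, using compactness of $X$). Perfect surjections do not raise the Lindel\"of number of the domain above that of the codomain, so $L(X \times Y) \leq L(Y)$; as this holds for all $Y$, the space $X$ is L-productive (and in particular productively Lindel\"of).

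Next, to see that $X$ is not projectively $\aleph_1$, I would exploit the weight bound. As a compact Hausdorff space $X$ is Tychonoff, and a Tychonoff space of weight $\leq \aleph_1$ embeds in the cube $[0,1]^{\omega_1}$. Fixing such an embedding $f : X \to [0,1]^{\omega_1}$, we have that $f$ is injective, so $\abs{f(X)} = \abs{X} > \aleph_1$. Thus $f$ witnesses the failure of projective $\aleph_1$-ness, completing the verification.

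All the genuine difficulty resides in Proposition \ref{Thm7}, which we are taking as given; the two facts invoked here---that compactness yields L-productivity via perfectness of the projection, and that weight $\leq \aleph_1$ yields an embedding into $[0,1]^{\omega_1}$---are entirely standard, so I do not expect any real obstacle once Proposition \ref{Thm7} is in hand. The one point deserving a moment's care is that the map witnessing the failure of projective $\aleph_1$-ness lands in $[0,1]^{\omega_1}$ specifically (rather than in some unspecified space of weight $\aleph_1$), which is exactly what the Tychonoff embedding theorem supplies.
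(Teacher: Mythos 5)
Your proposal is correct and follows exactly the paper's own argument: take the compact indestructible space of Proposition \ref{Thm7}, note that compact spaces are L-productive, and use the embedding of a space of weight $\aleph_1$ into $[0,1]^{\omega_1}$ together with $\abs{X} > \aleph_1$ to refute projective $\aleph_1$-ness. You merely spell out the standard details (perfectness of the projection via the tube lemma, the Tychonoff embedding) that the paper leaves implicit.
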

\begin{proof}
The example of Proposition \ref{Thm7} is the compact line (which has weight $\aleph_1$) obtained from a Kurepa tree \cite{To}.  Compact spaces are obviously L-productive.  Spaces of weight $\aleph_1$ are embeddable in $[0, 1]^{\omega_1}$.
\end{proof}

It follows that \eqref{displayEq} is equiconsistent with the apparently stronger $\aleph_1$-Borel Conjecture, for if \eqref{displayEq} holds, $\aleph_2$ is inaccessible in $L$ and so we can obtain that Conjecture.

Notice incidentally that one could generalize the proof of Theorem \ref{Thm5} if one knew that perfect subspaces of size $\geq \aleph_2$ of $[0, 1]^{\omega_1}$ included destructible compact subspaces.  The $\aleph_1$-Borel Conjecture assures this.  We can't do better; the Kurepa line of Proposition \ref{Thm7} is compact indestructible, and hence has every compact subspace indestructible.  

One might assume that projectively countable spaces are projectively $\aleph_1$; in fact, this is undecidable!

\begin{ex}\label{Example2}
If there is a Kurepa tree without an Aronszajn subtree (as there is in $L$ \cite{D22}), then there is a Lindel\"of linearly ordered $P$-space ($G_\delta$'s open) of weight $\aleph_1$ and size $> \aleph_1$ \cite{JuhaszWeiss}.  Such a space is obviously not
projectively $\aleph_1$, yet every $P$-space is projectively countable.
\end{ex}

On the other hand,

\begin{thm}
The $\aleph_1$-Borel Conjecture implies that projectively countable Lindel\"of spaces are projectively $\aleph_1$.
\end{thm}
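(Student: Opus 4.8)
The plan is to show that under the $\aleph_1$-Borel Conjecture, a projectively countable Lindel\"of space $X$ is indestructible, and then invoke the Conjecture again to conclude $X$ is projectively $\aleph_1$. This two-step strategy exploits the biconditional nature of the Conjecture: since it asserts that \emph{indestructible $\Leftrightarrow$ projectively $\aleph_1$} for Lindel\"of spaces, it suffices to establish that projective countability implies indestructibility, whereupon the projective $\aleph_1$ conclusion follows for free.

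For the key step---projectively countable Lindel\"of spaces are indestructible---I would argue as follows. Recall that projectively countable Lindel\"of spaces are Rothberger, as noted before Proposition \ref{Prop4}. I would first try to show directly that projective countability is preserved in any countably closed forcing extension, and that projectively countable Lindel\"of spaces remain Lindel\"of after such forcing. The point is that countably closed forcing adds no new reals, hence no new continuous maps to $\mathbb{R}$ or to $[0,1]^\omega$ that could enlarge a continuous image; so if every continuous real-valued image of $X$ is countable in the ground model, the same should hold in the extension, keeping the extension's version of $X$ projectively countable. Since projectively countable Lindel\"of spaces are Rothberger (hence Lindel\"of) by Proposition \ref{Prop4} applied in the extension, or more directly via the preservation argument of \cite{K},\cite{BCM},\cite{T}, the space $X$ stays Lindel\"of in the extension, which is precisely indestructibility. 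Once indestructibility is in hand, the $\aleph_1$-Borel Conjecture yields that $X$ is projectively $\aleph_1$.

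The hard part will be the preservation argument: one must verify carefully that ``projectively countable'' is genuinely forcing-absolute for countably closed forcing. The subtlety is that a continuous map $f : X \to [0,1]^\omega$ in the extension need not lie in the ground model, so one cannot simply quote ground-model countability of images. However, since countably closed forcing adds no new $\omega$-sequences of ordinals and hence no new reals, every such $f$ and its image should be captured by the ground model up to the relevant countable structure; the essential observation is that a separable metrizable target has a countable base, so the relevant combinatorial data of $f$ lives on a countable set and is therefore not changed by the forcing. I would make this precise by showing that any potential uncountable continuous image in the extension would reflect to an uncountable continuous image in the ground model, contradicting projective countability. Alternatively, if a direct preservation argument proves delicate, I would fall back on the game-theoretic characterization of indestructibility from \cite{ST}: one shows that TWO has a winning strategy in $\mathbf{G}_1^{\omega_1}(\mc{O},\mc{O})$ on a projectively countable Lindel\"of space, building the strategy from the Rothberger selections guaranteed by projective countability together with a reflection argument handling the $\omega_1$-many innings.
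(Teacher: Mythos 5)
Your high-level route is exactly the paper's: projectively countable Lindel\"of $\Rightarrow$ Rothberger $\Rightarrow$ indestructible, then one application of the $\aleph_1$-Borel Conjecture; the paper's entire proof consists of those two citations. The problems lie in your elaboration of the middle implication. First, the claim that countably closed forcing adds no new continuous maps into separable metrizable spaces because it adds no new reals is false. A continuous $f:X\to\mathbb{R}$ in the extension is coded by the sequence $\langle f^{-1}(B_n)\rangle_{n<\omega}$ for a countable base $\set{B_n : n < \omega}$ of $\mathbb{R}$, and while countably closed forcing adds no new $\omega$-sequences of ground-model objects, each $f^{-1}(B_n)$ is an arbitrary union of ground-model basic open sets --- essentially an arbitrary subset of a possibly uncountable base --- and countably closed forcing freely adds new subsets of uncountable sets (adding a subset of $\omega_1$ with countable conditions is the standard example). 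So new open sets, and hence new continuous functions, do appear, and ``the combinatorial data lives on a countable set'' does not save the argument. Second, even granting preservation of projective countability, your derivation is circular: to apply ``projectively countable Lindel\"of spaces are Rothberger'' in the extension you must already know $X$ is Lindel\"of there, which is precisely the indestructibility you are trying to prove. Lindel\"ofness cannot be dropped from that implication: $\omega_1$ with the order topology is projectively countable (every continuous real-valued function on it is eventually constant, so has countable range) but is not Lindel\"of, hence not Rothberger.

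Your fallback is also mis-aimed: the characterization in \cite{ST} is that indestructibility is equivalent to ONE \emph{not having} a winning strategy in $\mathbf{G}_1^{\omega_1}(\mc{O},\mc{O})$, not to TWO having one. These games are undetermined in general, and TWO possessing a winning strategy is a far stronger property (Galvin-type results tie TWO's winning strategies to cardinality restrictions on the space), so building such a strategy is the wrong, and typically unachievable, target. The short correct argument --- which is what the paper's ``and hence indestructible'' amounts to --- needs only the two game characterizations already quoted in the paper's definitions: if ONE had a winning strategy $\sigma$ in $\mathbf{G}_1^{\omega_1}(\mc{O},\mc{O})$, truncate it to the first $\omega$ innings; any $\omega$-run according to $\sigma$ in which TWO's selections covered $X$ would extend to a full $\omega_1$-run that TWO wins, so the truncation is a winning strategy for ONE in $\mathbf{G}_1^{\omega}(\mc{O},\mc{O})$, contradicting Rothbergerness by Pawlikowski's theorem \cite{P}. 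Substituting this (or simply the citation to \cite{ST}) for your preservation argument yields the paper's proof.
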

\begin{proof}
Projectively countable Lindel\"of spaces are Rothberger \cite{BCM}, \cite{K}, \cite{T} and hence indestructible. By the $\aleph_1$-Borel
Conjecture, they are then projectively $\aleph_1$.
\end{proof}

\section{The $\aleph_1$-Hurewicz Property}
\indent This section was motivated by the idea that, just as \textit{Borel's Conjecture implies that Rothberger spaces are Hurewicz} \cite{T}, we should be able to prove
\begin{thm}\label{ConjDagger}
The $\aleph_1$-Borel Conjecture implies that indestructible Lindel\"of spaces are $\aleph_1$-Hurewicz.
\end{thm}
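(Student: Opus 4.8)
The plan is to mimic the structure of the cited result that \emph{Borel's Conjecture implies Rothberger spaces are Hurewicz}, stepping everything up one cardinal.  First I would recall the definition of the $\aleph_1$-Hurewicz property: a Lindel\"of space $X$ is $\aleph_1$-Hurewicz if for every sequence $\set{\mc{U}_\alpha}_{\alpha<\omega_1}$ of open covers there are finite $\mc{V}_\alpha\subseteq\mc{U}_\alpha$ such that every point of $X$ lies in $\bigcup\mc{V}_\alpha$ for all but boundedly many $\alpha<\omega_1$ (equivalently, the game-theoretic or selection formulation appropriate to the $\aleph_1$ level).  The key structural fact I would exploit is that the Hurewicz property and its $\aleph_1$-analogue are characterized by the behaviour of continuous images: just as Hurewicz is tied to images in $\IR$ (or $\IN^{\IN}$) being bounded/$\sigma$-compact, $\aleph_1$-Hurewicz should be tied to images in $[0,1]^{\omega_1}$ (or the appropriate $\omega_1$-analogue of $\IN^{\IN}$) being suitably small.

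The core of the argument would proceed as follows.  Let $X$ be a Lindel\"of indestructible space.  By hypothesis (the $\aleph_1$-Borel Conjecture), $X$ is projectively $\aleph_1$: every continuous image $f(X)\subseteq[0,1]^{\omega_1}$ has cardinality at most $\aleph_1$.  The plan is to show that being projectively $\aleph_1$, together with indestructibility, forces $X$ to be $\aleph_1$-Hurewicz.  I would try to reduce $\aleph_1$-Hurewiczness to a statement about continuous images into a single fixed target space, analogous to the classical reduction of Hurewicz to the eventual boundedness of images in $\IN^{\IN}$.  Concretely, I expect that indestructibility already gives the $\aleph_1$-Rothberger selection property (the easy direction noted in the excerpt), and that the extra projective-$\aleph_1$ hypothesis upgrades the Rothberger-style selections into Hurewicz-style (finite, eventually-covering) selections, because a continuous image of size $\leq\aleph_1$ into the relevant function-space target cannot be ``unbounded'' in the $\omega_1$-sense.

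The main obstacle, as the surrounding discussion of Theorem~\ref{Thm5} and display~\eqref{displayEq} makes explicit, is that the naive one-cardinal step-up breaks down precisely at the point where one wants to embed or map a large subspace onto $2^{\omega_1}$: Example~\ref{DExample} shows that a space of size $2^{\aleph_1}$ without isolated points need not contain a copy of $2^{\omega_1}$, and by \v{S}apirovskii's theorem hereditarily normal spaces need not map a closed subspace onto $2^{\omega_1}$.  So the difficulty is that the combinatorial ``witness'' to failure of $\aleph_1$-Hurewiczness cannot be produced by the same perfect-set/Cantor-set argument used in Theorem~\ref{Thm5}.  My strategy for circumventing this is to lean directly on the $\aleph_1$-Borel Conjecture as the substitute for the missing embedding: rather than building a bad continuous image by hand, I would argue that if $X$ failed to be $\aleph_1$-Hurewicz one could extract a continuous image in $[0,1]^{\omega_1}$ that is not projectively $\aleph_1$ (i.e.\ of size $>\aleph_1$ or otherwise witnessing destructibility), contradicting the Conjecture via the equivalence ``indestructible $\iff$ projectively $\aleph_1$.''  The delicate step I expect to spend the most care on is verifying that the negation of $\aleph_1$-Hurewicz genuinely yields a map whose image violates projective-$\aleph_1$-ness, since the classical analogue (unbounded image in $\IN^{\IN}$) does not transfer automatically to the $\omega_1$ setting and may require an auxiliary continuity or closure argument of the kind used in the proof that $(X^\omega)_M=(X_M)^\omega$.
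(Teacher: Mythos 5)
Your opening move coincides with the paper's: apply the $\aleph_1$-Borel Conjecture to conclude that an indestructible Lindel\"of space is projectively $\aleph_1$ (note that only this direction of the Conjecture is used; your plan to also extract something ``witnessing destructibility'' from a bad image is superfluous, as is your proposed intermediate use of $\omega_1$-Rothberger selections, which plays no role). Past that point there is a genuine gap, in two respects. First, a definitional mismatch: you work with a finite-selection formulation (finite $\mc{V}_\alpha\subseteq\mc{U}_\alpha$ covering each point for all but boundedly many $\alpha$), whereas the paper deliberately \emph{defines} $\aleph_1$-Hurewicz via the compactification: for open $U_\alpha\supseteq X$ in $\beta X$, $\alpha<\omega_1$, there are closed $F_\alpha\subseteq\beta X$ with $X\subseteq\bigcup_{\alpha<\omega_1}F_\alpha\subseteq\bigcap_{\alpha<\omega_1}U_\alpha$ --- chosen, as the paper says explicitly, \emph{because} it enables the proof; the classical equivalences among Hurewicz formulations are not known to survive the step-up to $\omega_1$, a point you yourself concede when you say the bounded-image characterization ``does not transfer automatically.'' Proving your selective statement would not obviously prove the theorem as stated. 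Second, and more seriously, the entire content of the theorem --- that Lindel\"of projectively $\aleph_1$ spaces are $\aleph_1$-Hurewicz --- is exactly what you leave as a hope, sketched contrapositively with its key step flagged by you as unverified.

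The paper fills this in directly, not contrapositively, via Theorem \ref{w_1_pr_hur_pr_hur} and its Corollary. Given open $W_\alpha\supseteq X$ in $\beta X$, fix covers $\U_\alpha$ of $X$ by open subsets of $\beta X$ whose closures lie in $W_\alpha$; Lemma \ref{lel} yields continuous $f_\alpha:X\to\IR$ with each $f_\alpha^{-1}[-n,n]$ inside a finite union of the traces $\mc{U}'_\alpha$; diagonalize to $f:X\to\IR^{\omega_1}$. Since $\IR^{\omega_1}$ is a $G_{\aleph_1}$-subset of $[0,1]^{\omega_1}$ and $\abs{f(X)}\leq\aleph_1$ by projective $\aleph_1$-ness, the image is covered by $\aleph_1$ compact subsets $K$ of $\IR^{\omega_1}$; each $K$ is bounded in every coordinate, so $f^{-1}(K)$ lies in a finite union of members of each $\mc{U}'_\alpha$, whence $\op{cl}_{\beta X}f^{-1}(K)\subseteq\bigcap_{\alpha<\omega_1}W_\alpha$, and these $\aleph_1$ closed sets witness the definition. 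This functional reduction plus diagonal map is precisely the missing mechanism in your proposal: it converts ``image of size $\leq\aleph_1$'' into the required squeezed family of closed sets without ever needing to manufacture a large or destructible image from a failure of $\aleph_1$-Hurewiczness --- which is fortunate, since, as the paper's discussion of Example \ref{DExample} and the failure of the naive step-up of Theorem \ref{Thm5} shows, no Cantor-set-style extraction of such a witness is available at the $\omega_1$ level.
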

\noindent where \textit{$\aleph_1$-Hurewicz} is some natural generalization of the usual Hurewicz property.  We should also be able to generalize the classic theorem that \textit{Hurewicz \v{C}ech-complete spaces are $\sigma$-compact} so as to have \textit{$\aleph_1$-Hurewicz} in the hypothesis and \textit{$\aleph_1$-compact} (the union of $\aleph_1$ compact sets) in the conclusion.  We could then prove

\begin{thm}\label{Conj19}
The $\aleph_1$-Borel Conjecture implies that indestructible Lindel\"of $\aleph_1$-\v{C}ech-complete spaces are $\aleph_1$-compact.
\end{thm}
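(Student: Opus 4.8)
The plan is to factor the implication through the \emph{$\aleph_1$-Hurewicz} property and then step the classical ``Hurewicz \v{C}ech-complete implies $\sigma$-compact'' argument up one cardinal. First I would invoke Theorem \ref{ConjDagger}: under the $\aleph_1$-Borel Conjecture an indestructible Lindel\"of space $X$ is $\aleph_1$-Hurewicz. It then suffices to prove, in ZFC, the $\aleph_1$-analogue of the classical theorem, namely that an $\aleph_1$-Hurewicz $\aleph_1$-\v{C}ech-complete space is $\aleph_1$-compact; combining the two yields the statement.

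For the ZFC step I would imitate the proof that a Hurewicz \v{C}ech-complete space is $\sigma$-compact. Let $(\mathcal{U}_\alpha)_{\alpha<\omega_1}$ be a completeness sequence of open covers witnessing that $X$ is $\aleph_1$-\v{C}ech-complete. Applying the $\aleph_1$-Hurewicz property to this sequence yields finite $\mathcal{F}_\alpha\subseteq\mathcal{U}_\alpha$ such that, writing $W_\alpha=\bigcup\mathcal{F}_\alpha$, every $x\in X$ lies in $W_\alpha$ for all but countably many $\alpha$. Here the first genuinely transfinite point enters: because $\omega_1$ is regular, every countable subset is bounded, so ``all but countably many $\alpha$'' means ``all $\alpha\geq\beta_x$'' for some $\beta_x<\omega_1$. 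Thus the $\gamma$-cover condition coming from the Hurewicz selection becomes an honest tail condition, exactly parallel to ``all but finitely many'' in the countable case.

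Next, for each $\beta<\omega_1$ I would set $C_\beta=\bigcap_{\alpha\geq\beta}\overline{W_\alpha}$. Since each point lies in $W_\alpha$ on a tail, $X=\bigcup_{\beta<\omega_1}C_\beta$, a cover by $\aleph_1$ closed sets. I would then show each $C_\beta$ is compact using the completeness sequence: $C_\beta$ is a closed set contained, for every $\alpha\geq\beta$, in a finite union of members of $\mathcal{U}_\alpha$, so any maximal filter of closed subsets of $C_\beta$ is small with respect to the tail $(\mathcal{U}_\alpha)_{\alpha\geq\beta}$ and hence clusters; as this holds for every such filter, $C_\beta$ is compact. Writing $X$ as the union of the $\aleph_1$ compacta $C_\beta$ gives $\aleph_1$-compactness.

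The hard part will be the same issue that must be finessed in the classical proof, only now transfinitely. The Hurewicz selection produces finite \emph{unions} $W_\alpha$ rather than single cover members, so to run the completeness argument I must arrange the completeness sequence to shrink through closures (a $\overline{V}$-refinement, available since \v{C}ech-complete spaces are regular) so that each $\overline{W_\alpha}$ remains $\mathcal{U}_\alpha$-bounded and the resulting filter is forced to cluster back inside $X$. Pinning down the precise definition of $\aleph_1$-\v{C}ech-complete---equivalently a length-$\omega_1$ completeness sequence of open covers, or being a $G_{\aleph_1}$ subset of a compactification---so that the tail of the sequence still captures $X$ (the way a decreasing sequence does in the countable case) is where the care is concentrated; once that is fixed, the regularity of $\omega_1$ makes the remaining bookkeeping go through essentially unchanged from the classical argument.
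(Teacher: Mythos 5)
Your high-level factoring matches the paper exactly: Theorem \ref{ConjDagger} gives $\aleph_1$-Hurewicz under the $\aleph_1$-Borel Conjecture, and the ZFC step is the paper's Theorem \ref{nThm21}. But your proof of the ZFC step uses the wrong notion of $\aleph_1$-Hurewicz, and the gap is fatal, not cosmetic. The paper's definition (the one Theorem \ref{ConjDagger} actually delivers) is the compactification-based one: for any $\aleph_1$-many open $U_\alpha\supseteq X$ in $\beta X$ there are closed $F_\alpha\subseteq\beta X$ with $X\subseteq\bigcup_\alpha F_\alpha\subseteq\bigcap_\alpha U_\alpha$. You instead apply an $\omega_1$-length $\bigcup_{\mathit{fin}}(\mc{O},\Gamma)$-style selection principle: finite $\mc{F}_\alpha\subseteq\mc{U}_\alpha$ with every point in $W_\alpha=\bigcup\mc{F}_\alpha$ for all but countably many $\alpha$. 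That principle is not equivalent to the paper's definition at length $\omega_1$; indeed it fails already for $\mathbb{R}$, which is $\sigma$-compact and hence $\aleph_1$-Hurewicz in the paper's sense. Take each $\mc{U}_\alpha$ to be the cover of $\mathbb{R}$ by bounded open intervals: each $W_\alpha$ is then bounded, say $W_\alpha\subseteq(-m_\alpha,m_\alpha)$ with $m_\alpha\in\omega$, and since a function from $\omega_1$ to $\omega$ must have an uncountable fiber, there is $m$ with $m_\alpha\leq m$ for uncountably many $\alpha$; the point $x=m+1$ then misses uncountably many $W_\alpha$. This cofinality obstruction ($\operatorname{cf}(\omega_1)=\omega_1>\omega$, so finite selections cannot ``catch up'' with an $\omega_1$-sequence of demands) is precisely why the classical step-up you propose breaks, and why the authors chose the $\beta X$-based definition in the first place --- they say explicitly that it is chosen ``because it enables us to prove Theorems \ref{ConjDagger} and \ref{Conj19}.'' Consequently your tail sets $C_\beta$ cannot be produced, and the decomposition $X=\bigcup_\beta C_\beta$ never gets off the ground.

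The paper's actual argument for the ZFC half is much shorter and sidesteps covers entirely: a routine generalization of the standard Engelking argument shows an $\aleph_1$-\vc-complete space is a $G_{\aleph_1}$ in $\beta X$, i.e., $X=\bigcap_{\alpha<\omega_1}U_\alpha$ with each $U_\alpha$ open in $\beta X$; applying the ($\beta X$-form) $\aleph_1$-Hurewicz property to exactly these $U_\alpha$ yields closed, hence compact, $F_\alpha\subseteq\beta X$ with $X\subseteq\bigcup_\alpha F_\alpha\subseteq\bigcap_\alpha U_\alpha=X$, so $X$ is the union of $\aleph_1$ compacta. (Your secondary worries --- single members versus finite unions, and arranging refinement through closures along a transfinite sequence, where common open refinements fail at limit stages --- are also real problems for your route, but they are moot given the primary obstruction above.) If you want to salvage a selection-theoretic formulation, you would first have to prove its equivalence with the $\beta X$ form for Lindel\"of spaces at length $\omega_1$, and the $\mathbb{R}$ example shows the naive ``all but countably many'' formulation cannot be the right one.
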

\noindent where ``$\aleph_1$-\v{C}ech-complete" is a natural generalization defined below of ``\v{C}ech-complete".

There are several equivalent definitions of the Hurewicz property.  See e.g. \cite{aT2}, \cite{aL}.  We will use the following generalization of one such equivalent as our definition of $\aleph_1$-Hurewicz, because it enables us to prove Theorems \ref{ConjDagger} and \ref{Conj19}.

\begin{defn}
A Lindel\"of space is \defined{$\aleph_1$-Hurewicz} if whenever $\set{U_\alpha : \alpha <
\omega_1}$ are open sets in $\beta X$ including $X$, there are closed sets $\set{F_\alpha : \alpha < \omega_1}$ in $\beta X$ such that $X \subseteq \bigcup\set{F_\alpha : \alpha < \omega_1} \subseteq \bigcap\set{U_\alpha: \alpha < \omega_1}$.
\end{defn}

\begin{defn}
A space is $\aleph_1$-compact if it is the union of $\aleph_1$ compact sets.
\end{defn}

\noindent\textbf{Note: } ``$\aleph_1$-compact" used to mean what is now called ``countable extent".  It seems appropriate to repurpose the term.

\begin{defn}
A space $X$ is \defined{$\aleph_1$-\v{C}ech-complete} if there are open covers $\set{\mc{U}_\alpha}_{\alpha < \omega_1}$ of $X$ such that any centered family of closed sets which, for each $\alpha$, contains a closed set included in some member of $\mc{U}_\alpha$ has non-empty intersection.
\end{defn}

\begin{thm}\label{nThm21}
$\aleph_1$-Hurewicz, $\aleph_1$-\v{C}ech-complete spaces are $\aleph_1$-compact.
\end{thm}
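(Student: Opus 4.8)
The plan is to imitate, one cardinal up, the classical theorem that a Hurewicz \v{C}ech-complete space is $\sigma$-compact. There one writes $X=\bigcap_{n<\omega}G_n$ as a $G_\delta$ in $\beta X$, uses the Hurewicz property to trap $X$ inside a countable union of compacta lying in $\bigcap_n G_n$, and is done. Accordingly I would first extract from $\aleph_1$-\v{C}ech-completeness a representation of $X$ as the intersection of $\aleph_1$ open subsets of $\beta X$ (the natural $\aleph_1$-analogue of being $G_\delta$), and then feed that family directly into the $\beta X$-phrasing of $\aleph_1$-Hurewicz, which is tailored to return compact sets.

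Concretely, let $\{\mathcal U_\alpha:\alpha<\omega_1\}$ be open covers of $X$ witnessing $\aleph_1$-\v{C}ech-completeness. For each $\alpha$ and each $U\in\mathcal U_\alpha$ choose an open $\tilde U\subseteq\beta X$ with $\tilde U\cap X=U$, and set $W_\alpha=\bigcup\{\tilde U:U\in\mathcal U_\alpha\}$; then each $W_\alpha$ is open in $\beta X$ and contains $X$. The heart of the argument is the claim that $\bigcap_{\alpha<\omega_1}W_\alpha=X$, the inclusion $\supseteq$ being clear. For $\subseteq$, fix $p\in\bigcap_\alpha W_\alpha$ and consider the family $\mathcal F_p=\{\overline{N\cap X}^{\,X}:N\ni p\text{ open in }\beta X\}$ of closed subsets of $X$. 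Since the neighborhoods of $p$ form a filter base and $X$ is dense in $\beta X$, $\mathcal F_p$ is centered, and since $\beta X$ is Hausdorff one checks $\bigcap\mathcal F_p\subseteq\{p\}$. Now for each $\alpha$, $p$ lies in some $\tilde U$ with $U\in\mathcal U_\alpha$; using regularity of $\beta X$ I would pick an open $N_\alpha\ni p$ with $\overline{N_\alpha}^{\,\beta X}\subseteq\tilde U$, so that $\overline{N_\alpha\cap X}^{\,X}\subseteq\tilde U\cap X=U\in\mathcal U_\alpha$. Thus $\mathcal F_p$ contains, for every $\alpha$, a closed set included in a member of $\mathcal U_\alpha$, whence $\aleph_1$-\v{C}ech-completeness forces $\bigcap\mathcal F_p\neq\emptyset$; as this intersection lies in $\{p\}$ and inside $X$, we conclude $p\in X$.

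With $\bigcap_\alpha W_\alpha=X$ established, I would apply $\aleph_1$-Hurewicz to the family $\{W_\alpha:\alpha<\omega_1\}$ of open sets of $\beta X$ containing $X$, obtaining closed sets $F_\alpha\subseteq\beta X$ with
\[ X\subseteq\bigcup_{\alpha<\omega_1}F_\alpha\subseteq\bigcap_{\alpha<\omega_1}W_\alpha=X. \]
Each $F_\alpha$, being closed in the compact space $\beta X$, is compact, and the displayed chain forces both $F_\alpha\subseteq X$ and $X=\bigcup_\alpha F_\alpha$; hence $X$ is a union of $\aleph_1$ compact sets, i.e.\ $\aleph_1$-compact. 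The step I expect to be the real obstacle is the middle claim $\bigcap_\alpha W_\alpha=X$: the completeness hypothesis speaks only about centered families of sets \emph{closed in $X$} and their intersections \emph{within} $X$, so the work is to manufacture, for an arbitrary $p\in\bigcap_\alpha W_\alpha$, a centered family inside $X$ that is subordinated to every $\mathcal U_\alpha$ yet whose only possible cluster point is $p$. Once completeness is triggered in this way, the $\beta X$-formulation of $\aleph_1$-Hurewicz renders the conclusion essentially immediate.
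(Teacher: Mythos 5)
Your proof is correct and follows precisely the route the paper takes: the paper's proof consists of observing that a routine generalization of the classical argument shows an $\aleph_1$-\v{C}ech-complete space is a $G_{\aleph_1}$ in $\beta X$, after which the $\beta X$-formulation of $\aleph_1$-Hurewicz yields the conclusion immediately. You have simply written out in full the ``routine generalization'' (the centered family $\mathcal{F}_p$ of traces of neighborhoods of $p$, subordinated to each $\mathcal{U}_\alpha$ via regularity of $\beta X$) that the paper leaves to the reader, and that verification is sound.
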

\begin{proof}
A routine generalization of the standard proof (see e.g. \cite{E}) that \v{C}ech-complete spaces are $G_\delta$'s in their Stone-\v{C}ech compactifications establishes that an $\aleph_1$-\v{C}ech-complete space is a $G_{\aleph_1}$, i.e., is the intersection of $\aleph_1$ open sets in its Stone-\v{C}ech compactification.  The theorem follows immediately.
\end{proof}

\begin{defn}
A space is \defined{projectively $\aleph_1$-compact} (\defined{projectively $\aleph_1$-Hurewicz}) if its continuous image in $[0, 1]^{\omega_1}$ is always $\aleph_1$-compact ($\aleph_1$-Hurewicz).
\end{defn}

The following standard fact follows, e.g., from Lemma 1.0 in \cite{aL}.

\begin{lem} \label{lel}
Let $\mc{U}$ be an open cover of a regular Lindel\"of space $X$. Then there exists a continuous function $f:X\to \mathbb R$ such that $f^{-1}([-n,n])$ is included in a finite union of elements of $\mc{U}$ for every $n\in\omega$.
\end{lem}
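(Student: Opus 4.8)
The plan is to reduce to a countable subcover and then build a single continuous ``exhaustion'' function whose sublevel sets creep outward along the cover. First, since $X$ is Lindel\"of I may extract a countable subcover $\{U_n:n\in\omega\}$ of $\mc U$, and after replacing $U_n$ by $V_n:=\bigcup_{i\le n}U_i$ I may assume the subcover is increasing, $V_0\subseteq V_1\subseteq\cdots$, with $\bigcup_n V_n=X$. Each $V_n$ is a finite union of members of $\mc U$, so it suffices to produce a continuous $f:X\to\IR$ with $f^{-1}([-n,n])\subseteq V_{n+1}$ for every $n$.

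The second step is to shrink this increasing cover. Since $X$ is regular and Lindel\"of it is paracompact, hence normal and countably paracompact; I would invoke the shrinking lemma to obtain an (again increasing, after passing to finite unions) open cover $\{W_n:n\in\omega\}$ with $\overline{W_n}\subseteq V_n$ for all $n$. Concretely, applying Dowker's characterization of countable paracompactness to the decreasing closed sets $X\setminus V_n$, whose intersection is empty, yields open supersets whose closures intersect in $\emptyset$, and complementing these gives the desired $W_n$.

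Third, I would assemble $f$ from Urysohn functions. For each $n$, normality together with $\overline{W_n}\subseteq V_n\subseteq V_{n+1}$ gives a continuous $g_n:X\to[0,1]$ with $g_n\equiv 0$ on $\overline{W_n}$ and $g_n\equiv 1$ on $X\setminus V_{n+1}$, and I set $f=\sum_{n\in\omega}g_n$. The key is that $\{W_m\}$ is an increasing cover, so every $x$ lies in some $W_{m_0}$, whence $g_n(x)=0$ for all $n\ge m_0$ because $x\in W_{m_0}\subseteq\overline{W_n}$; thus the sum is actually finite on the neighbourhood $W_{m_0}$, which simultaneously shows that $f$ is well defined and, being locally a finite sum of continuous functions, continuous. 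For the sublevel estimate, if $x\notin V_{N+1}$ then $x\notin V_{n+1}$ for every $n\le N$, so $g_n(x)=1$ for $n=0,\dots,N$ and hence $f(x)\ge N+1$; contrapositively $f^{-1}([-N,N])=f^{-1}([0,N])\subseteq V_{N+1}$, a finite union of elements of $\mc U$, as required.

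The main obstacle is the shrinking step. For an \emph{increasing} countable cover the naive inductive shrinking collapses, since each $V_n$ together with the tail already covers $X$, and normality alone is insufficient: a Dowker space is normal yet carries an increasing open cover with no such shrinking. This is precisely where the full strength of ``regular Lindel\"of'' enters, via the implication regular $+$ Lindel\"of $\Rightarrow$ paracompact $\Rightarrow$ countably paracompact; everything else is a routine Urysohn-summation argument.
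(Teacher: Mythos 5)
Your proof is correct. For comparison: the paper does not prove this lemma at all --- it is quoted as a standard fact, with a pointer to Lemma 1.0 of \cite{aL} --- so your self-contained argument is necessarily a different route, and it is essentially the standard one: countable subcover, pass to the increasing cover $V_n$, shrink, then sum Urysohn functions. You correctly identified and handled the one genuinely delicate point, namely that an \emph{increasing} countable open cover of a normal space need not admit a closed shrinking (Dowker spaces), so that invoking countable paracompactness via Morita's theorem (regular Lindel\"of $\Rightarrow$ paracompact) and Dowker's characterization is a legitimate fix; your complementation $W_n = X\setminus\overline{G_n}$ does yield an open cover with $\overline{W_n}\subseteq V_n$, and the rest (local finiteness of the sum on $W_{m_0}$, the estimate $f(x)\geq N+1$ off $V_{N+1}$) checks out. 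One remark: you can avoid Dowker's theorem entirely, since the lemma only asks that $f^{-1}([-n,n])$ lie in \emph{some} finite union of members of $\mc{U}$, not in $V_{n+1}$ specifically. Using regularity, choose for each $x$ an open $O_x$ with $\overline{O_x}\subseteq U_{n(x)}$ for some member $U_{n(x)}$ of the countable subcover, extract a countable subcover $\{O_{x_k}\}$, and set $W_n=\bigcup_{k\leq n}O_{x_k}$; then $\overline{W_n}=\bigcup_{k\leq n}\overline{O_{x_k}}$ is contained in a finite union $V_{m(n)}$ of elements of $\mc{U}$, and the same Urysohn summation (with $g_n$ vanishing on $\overline{W_n}$ and equal to $1$ off $V_{m(n)}$) gives $f^{-1}([-N,N])\subseteq V_{m(N)}$. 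This needs only that regular Lindel\"of spaces are normal, which is more elementary than full paracompactness; your version buys the sharper conclusion $f^{-1}([-n,n])\subseteq V_{n+1}$, which the lemma does not require.
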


Let us note that in the definition of the $\aleph_1$-Hurewicz property the Stone-\v{C}ech compactification $\beta X$ may be replaced by any other one.

\begin{thm} \label{w_1_pr_hur_pr_hur}
Every Lindel\"of projectively $\aleph_1$-Hurewicz space is $\aleph_1$-Hurewicz.
\end{thm}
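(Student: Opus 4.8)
The plan is to encode the given family of open sets in $\beta X$ as a single continuous map into $[0,1]^{\omega_1}$, apply the projective hypothesis to its image, and then pull the resulting closed sets back to $\beta X$. So let $\{U_\alpha : \alpha < \omega_1\}$ be open sets in $\beta X$ each containing $X$; I must produce closed sets $F_\alpha$ in $\beta X$ with $X \subseteq \bigcup_\alpha F_\alpha \subseteq \bigcap_\alpha U_\alpha$.

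First, for each $\alpha$ I would manufacture a continuous real-valued coordinate function. Since $\beta X$ is compact Hausdorff, hence regular, every $x \in X \subseteq U_\alpha$ has an open neighborhood $O_x$ in $\beta X$ with $\overline{O_x} \subseteq U_\alpha$; thus $\mc{U}_\alpha = \set{O_x \cap X : x \in X}$ is an open cover of the regular Lindel\"of space $X$. Applying Lemma~\ref{lel} to $\mc{U}_\alpha$ yields a continuous $f_\alpha : X \to \mathbb{R}$ with each $f_\alpha^{-1}([-n,n])$ contained in a finite union of members of $\mc{U}_\alpha$; taking $\beta X$-closures and using that a finite union of closures is the closure of the union, I get $\overline{f_\alpha^{-1}([-n,n])}^{\beta X} \subseteq U_\alpha$ for every $n$. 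Composing each $f_\alpha$ with a fixed increasing homeomorphism $h : \mathbb{R} \to (0,1)$ and assembling coordinates gives a continuous $f = \langle h \circ f_\alpha \rangle_{\alpha < \omega_1} : X \to (0,1)^{\omega_1} \subseteq [0,1]^{\omega_1}$. Let $Y = f(X)$ and $bY = \overline{Y}^{[0,1]^{\omega_1}}$; by the observation above that $\beta X$ may be replaced by any compactification, I may compute the $\aleph_1$-Hurewicz property of $Y$ using $bY$ in place of $\beta Y$.

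Next I would extend $f$ to $\hat f = \beta f : \beta X \to bY$, a continuous surjection, and set $g_\alpha = \pi_\alpha \circ \hat f : \beta X \to [0,1]$, where $\pi_\alpha$ is the $\alpha$-th coordinate projection; note $g_\alpha$ extends $h \circ f_\alpha$, so $g_\alpha(X) \subseteq (0,1)$. The crucial step, which I expect to be the main obstacle, is to verify that $g_\alpha^{-1}((0,1)) \subseteq U_\alpha$, equivalently $\hat f^{-1}(W_\alpha) \subseteq U_\alpha$ where $W_\alpha = \pi_\alpha^{-1}((0,1))$. For this, given $p$ with $g_\alpha(p) \in (0,1)$, choose $n$ with $g_\alpha(p) \in h((-n,n))$; then $V = g_\alpha^{-1}(h((-n,n)))$ is an open neighborhood of $p$ with $V \cap X \subseteq f_\alpha^{-1}([-n,n])$, so by density of $X$ in $\beta X$, $p \in V \subseteq \overline{V \cap X}^{\beta X} \subseteq \overline{f_\alpha^{-1}([-n,n])}^{\beta X} \subseteq U_\alpha$. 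This density argument is exactly what lets the finiteness data of Lemma~\ref{lel} survive the passage to $\beta X$.

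Finally I would apply the hypothesis: $Y$ is a Lindel\"of continuous image of $X$ in $[0,1]^{\omega_1}$, so by projective $\aleph_1$-Hurewicz it is $\aleph_1$-Hurewicz. Each $W_\alpha \cap bY$ is open in $bY$ and contains $Y$, so there are closed sets $E_\alpha$ in $bY$ with $Y \subseteq \bigcup_\alpha E_\alpha \subseteq \bigcap_\alpha W_\alpha$. Putting $F_\alpha = \hat f^{-1}(E_\alpha)$, which is closed in $\beta X$, for $x \in X$ we have $f(x) \in E_\alpha$ for some $\alpha$ and hence $x \in F_\alpha$, giving $X \subseteq \bigcup_\alpha F_\alpha$; and for every $\alpha$, $F_\alpha \subseteq \hat f^{-1}\left(\bigcap_\delta W_\delta\right) = \bigcap_\delta \hat f^{-1}(W_\delta) \subseteq \bigcap_\delta U_\delta$ by the crucial step, so $\bigcup_\alpha F_\alpha \subseteq \bigcap_\delta U_\delta$. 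This exhibits the closed sets required by the definition, completing the proof.
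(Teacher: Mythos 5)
Your proof is correct and takes essentially the same route as the paper's: you encode each $U_\alpha$ by a real-valued function via Lemma~\ref{lel}, diagonalize into an $\omega_1$-indexed product (your $(0,1)^{\omega_1}$ is just the paper's $\mathbb{R}^{\omega_1}$ re-coordinatized), apply the projective $\aleph_1$-Hurewicz hypothesis to the image, and transfer the resulting closed sets back to $\beta X$. The only differences are mechanical: you pull the closed sets back along the Stone--\v{C}ech extension $\hat f$ and verify $\hat f^{-1}(W_\alpha)\subseteq U_\alpha$ by a pointwise density argument, where the paper instead takes $\beta X$-closures of preimages of the compact sets $K\in\mathcal K$ and uses boundedness of each coordinate projection $\pi_\alpha(K)$ --- two equivalent ways of making the finiteness data of Lemma~\ref{lel} survive the passage to $\beta X$.
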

\begin{proof}
Let $X$ be a Lindel\"of projectively $\aleph_1$-Hurewicz space and $\{W_\alpha:\alpha<\omega_1\}$ be a collection of open subsets of $\beta X$ including $X$. For every $\alpha$ fix a cover $\U_\alpha$ of $X$ by open subsets of $\beta X$ whose closures are subsets of $W_\alpha$. Set $\mc{U}'_\alpha=\{U\cap X:U\in\mc{U}_\alpha\}$. By Lemma~\ref{lel} for every $\alpha$ there exists a continuous function $f_\alpha:X\to \mathbb R$ such that $f_\alpha^{-1}[-n,n]$ is included in a union of finitely many elements of $\mc{U}'_\alpha$ for all $n$. Now set $f:X\to \mathbb R^{\omega_1}$, $f(x)(\alpha)=f_\alpha(x)$. Since $\mathbb R^{\omega_1}$ is homeomorphic to a $G_{\aleph_1}$-subset of $[0,1]^{\aleph_1}$ and $X$ is projectively $\aleph_1$-Hurewicz, there exists a collection $\mathcal K$ of compact subsets of $\mathbb R^{\omega_1}$ such that $\abs{\mathcal K}\leq\aleph_1$ and $f(X)\subseteq\bigcup\mathcal K$. Therefore $X\subseteq\bigcup_{K\in\mathcal K}f^{-1}(K)$.
It also follows from the above that for every $K\in\mathcal K$ and $\alpha\in\omega_1$ the preimage $f^{-1}(K)$ is included in a finite union of elements of $\mathcal{U}'_\alpha$, and hence its closure in $\beta X$ is included in $\bigcap_{\alpha\in\omega_1}W_\alpha$. Thus
\[X\subseteq\bigcup_{K\in\mathcal K}\op{cl}_{\beta X}f^{-1}(K)\subseteq\bigcap_{\alpha<\omega_1}W_\alpha,\]
which completes our proof.
\end{proof}

\begin{cor}
Lindel\"of projectively $\aleph_1$ spaces are $\aleph_1$-Hurewicz.
\end{cor}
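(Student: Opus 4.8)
The plan is to deduce this from Theorem~\ref{w_1_pr_hur_pr_hur}, which reduces the task to showing that every Lindel\"of projectively $\aleph_1$ space is in fact projectively $\aleph_1$-Hurewicz. So let $X$ be Lindel\"of and projectively $\aleph_1$, and let $Y = f(X)$ be a continuous image of $X$ in $[0,1]^{\omega_1}$. Two things are immediate: $Y$ is Lindel\"of, being a continuous image of the Lindel\"of space $X$, and $\abs{Y} \leq \aleph_1$ by the definition of projective $\aleph_1$. It therefore suffices to prove that a Lindel\"of space of size $\leq \aleph_1$ is $\aleph_1$-Hurewicz.

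The bridge I would use is that such a space is $\aleph_1$-compact: writing $Y = \set{y_\alpha : \alpha < \omega_1}$, each singleton $\set{y_\alpha}$ is compact, so $Y = \bigcup_{\alpha < \omega_1}\set{y_\alpha}$ exhibits $Y$ as a union of $\aleph_1$ compact sets. The remaining step is to verify that $\aleph_1$-compact spaces are $\aleph_1$-Hurewicz, and this is where the definitions line up cleanly. Suppose $Y = \bigcup_{\alpha < \omega_1}K_\alpha$ with each $K_\alpha$ compact, and let $\set{U_\alpha : \alpha < \omega_1}$ be open sets in $\beta Y$ with $Y \subseteq U_\alpha$ for every $\alpha$. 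Setting $F_\alpha = K_\alpha$ works: each $K_\alpha$ is compact in the Hausdorff space $\beta Y$, hence closed there, and $Y = \bigcup_\alpha F_\alpha \subseteq \bigcap_\alpha U_\alpha$ since $Y$ lies inside every $U_\alpha$. Thus $Y$ is $\aleph_1$-Hurewicz, so $X$ is projectively $\aleph_1$-Hurewicz, and Theorem~\ref{w_1_pr_hur_pr_hur} finishes the job.

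I do not expect a genuine obstacle here, since the corollary is essentially a packaging of Theorem~\ref{w_1_pr_hur_pr_hur} with the trivial observation that small spaces are $\aleph_1$-compact. The only point requiring a moment's care is that the $\aleph_1$-Hurewicz property is stated for \emph{Lindel\"of} spaces, so one must retain the Lindel\"of hypothesis along the continuous image --- which holds automatically --- before invoking the compact-pieces argument. Everything else is bookkeeping in the definitions.
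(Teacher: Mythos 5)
Your proof is correct and follows essentially the same route as the paper: the corollary is stated there without proof, being precisely Theorem~\ref{w_1_pr_hur_pr_hur} combined with the observation that projectively $\aleph_1$ implies projectively $\aleph_1$-compact (an image of size $\leq\aleph_1$ is a union of $\aleph_1$ singletons) implies projectively $\aleph_1$-Hurewicz (the compact pieces are closed in $\beta Y$ and their union $Y$ lies in every $U_\alpha$). The paper even records this chain explicitly a few paragraphs later (``Clearly projectively $\aleph_1$ implies projectively $\aleph_1$-compact implies projectively $\aleph_1$-Hurewicz''), so your write-up is the intended argument made explicit.
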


Theorems \ref{ConjDagger} and \ref{Conj19} follow immediately.

The reason we are interested in $\aleph_1$-compactness is because by Lemma \ref{Lem8a} we have:

\begin{lem}[{\cite{BT}}]\label{nLem19}
CH implies productively Lindel\"of $\aleph_1$-compact spaces are powerfully Lindel\"of.
\end{lem}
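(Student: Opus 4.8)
The plan is to reduce everything to Lemma~\ref{Lem8a}: I would show that an $\aleph_1$-compact space $X$ automatically satisfies $L(X^\omega)\leq\aleph_1$ under CH, and then feed this into Lemma~\ref{Lem8a}, whose hypotheses are exactly ``productively Lindel\"of'' together with $L(X^\omega)\leq\aleph_1$. In this way productive Lindel\"ofness is used only at the very last step, and $\aleph_1$-compactness is used only to control the Lindel\"of number of the power.

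First I would invoke $\aleph_1$-compactness to write $X=\bigcup_{\alpha<\omega_1}K_\alpha$ with each $K_\alpha$ compact. The key observation is that this decomposition lifts to the countable power. For each $\vec\alpha\in\omega_1^\omega$ the set $\prod_{n<\omega}K_{\vec\alpha(n)}$ is a compact subset of $X^\omega$ by Tychonoff's theorem, and every point $(x_n)_{n<\omega}\in X^\omega$ lies in at least one of them: for each $n$ simply pick an index $\alpha_n$ with $x_n\in K_{\alpha_n}$. Hence
\[X^\omega=\bigcup_{\vec\alpha\in\omega_1^\omega}\ \prod_{n<\omega}K_{\vec\alpha(n)},\]
which exhibits $X^\omega$ as a union of compact sets indexed by $\omega_1^\omega$. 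Next I would count the index set. Since $\aleph_1\leq 2^{\aleph_0}$ one always has $\aleph_1^{\aleph_0}=2^{\aleph_0}$, so CH yields $\abs{\omega_1^\omega}=\aleph_1^{\aleph_0}=\aleph_1$. Thus $X^\omega$ is itself $\aleph_1$-compact, and any open cover of $X^\omega$ restricts to a finite subcover on each of the $\aleph_1$ compact pieces, giving $L(X^\omega)\leq\aleph_1$.

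Finally, since $X$ is productively Lindel\"of and we now have $L(X^\omega)\leq\aleph_1$, Lemma~\ref{Lem8a} (which also assumes CH) concludes that $X$ is powerfully Lindel\"of, as required. I do not expect any serious obstacle here: the genuinely nontrivial content is entirely encapsulated in the cited Lemma~\ref{Lem8a}. Within the present argument the only point requiring care is the cardinal computation $\aleph_1^{\aleph_0}=\aleph_1$, and this is precisely where CH is consumed (apart from its further use inside Lemma~\ref{Lem8a}); without CH the index set $\omega_1^\omega$ could have size up to the continuum and the bound $L(X^\omega)\leq\aleph_1$ would fail.
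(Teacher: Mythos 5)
Your proof is correct and follows essentially the same route the paper takes: the paper derives this lemma directly from Lemma \ref{Lem8a}, exactly as you do, by observing that an $\aleph_1$-compact $X$ has $X^\omega$ covered by the compact sets $\prod_{n<\omega}K_{\vec\alpha(n)}$ for $\vec\alpha\in\omega_1^\omega$, where CH gives $\aleph_1^{\aleph_0}=2^{\aleph_0}=\aleph_1$ and hence $L(X^\omega)\leq\aleph_1$. Your cardinal computation and the application of Lemma \ref{Lem8a} are both sound, so there is nothing to add.
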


This result could be used to establish that \textit{CH implies productively Lindel\"of \v{C}ech-complete spaces are powerfully Lindel\"of}, since Lindel\"of \v{C}ech-complete spaces are perfect preimages of separable metrizable spaces, which latter have cardinality $\leq 2^{\aleph_0}$, but we shall not do so because it is known without CH that \textit{Lindel\"of \v{C}ech-complete (indeed $p$-)spaces are powerfully Lindel\"of}.

Incidentally, let us mention:

\begin{thm}
The $\aleph_1$-Borel Conjecture implies Lindel\"of \v{C}ech-complete spaces are $\aleph_1$-compact.
\end{thm}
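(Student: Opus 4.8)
The plan is to derive this theorem by combining the two major tools the paper has already assembled: the $\aleph_1$-Borel Conjecture (which equates indestructibility with projective $\aleph_1$-ness for Lindel\"of spaces) and the chain of implications culminating in the Corollary that Lindel\"of projectively $\aleph_1$ spaces are $\aleph_1$-Hurewicz. So the first step is to observe that every Lindel\"of \v{C}ech-complete space is, in particular, indestructible. This is the crux: a Lindel\"of \v{C}ech-complete space is a perfect preimage of a separable metrizable space, equivalently a $G_\delta$ in its Stone-\v{C}ech compactification, and such spaces are Rothberger (indeed the paper notes Lindel\"of \v{C}ech-complete spaces are powerfully Lindel\"of, and more to the point they satisfy the game-theoretic characterization of indestructibility). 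I would cite the known fact that Lindel\"of \v{C}ech-complete spaces are indestructible, or argue it directly via the completeness sequence giving Player TWO a winning counter-strategy in $\mathbf{G}_1^{\omega_1}(\mc{O},\mc{O})$.

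Once indestructibility is in hand, the $\aleph_1$-Borel Conjecture immediately upgrades it: a Lindel\"of indestructible space is projectively $\aleph_1$. Then by the Corollary stating that Lindel\"of projectively $\aleph_1$ spaces are $\aleph_1$-Hurewicz, our space $X$ is $\aleph_1$-Hurewicz. At this point I want to apply Theorem \ref{nThm21}, which says $\aleph_1$-Hurewicz, $\aleph_1$-\v{C}ech-complete spaces are $\aleph_1$-compact, so the remaining gap is to verify that an ordinary Lindel\"of \v{C}ech-complete space is $\aleph_1$-\v{C}ech-complete. This should be routine: a \v{C}ech-complete space carries a countable family of open covers witnessing completeness, and one can simply repeat each such cover $\aleph_1$ times (or pad the countable sequence with trivial covers) to obtain an $\omega_1$-indexed family satisfying the centered-intersection condition of $\aleph_1$-\v{C}ech-completeness.

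Assembling the pieces, the proof reads: let $X$ be Lindel\"of \v{C}ech-complete; then $X$ is indestructible, hence by the $\aleph_1$-Borel Conjecture projectively $\aleph_1$, hence $\aleph_1$-Hurewicz by the Corollary; since $X$ is also $\aleph_1$-\v{C}ech-complete, Theorem \ref{nThm21} yields that $X$ is $\aleph_1$-compact. The one step I expect to carry the real content is establishing that Lindel\"of \v{C}ech-complete spaces are indestructible — everything downstream is bookkeeping and invocation of the machinery the paper has already developed. I would either supply a short forcing argument (completeness is absolute enough that a $G_\delta$-in-$\beta X$ description survives countably closed forcing, keeping the extension Lindel\"of) or, more cleanly, use the game characterization from \cite{ST}: TWO's winning strategy in the length-$\omega_1$ Rothberger game follows the completeness sequence, ensuring the intersection of the chosen closed sets is nonempty and the selections cover.
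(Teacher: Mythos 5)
Your proof has a fatal gap at exactly the step you identify as carrying the real content: the claim that Lindel\"of \v{C}ech-complete spaces are indestructible is false, and the paper itself contains the counterexample. Every compact Hausdorff space is Lindel\"of and \v{C}ech-complete, yet Example \ref{DExample} exhibits a compact \emph{destructible} space, namely the lexicographic order topology on $2^{\omega_1}$. Your heuristics for why indestructibility should hold both break down on this example: being a perfect preimage of a separable metrizable space is no obstruction, since a compact space is trivially a perfect preimage of a point; and the $G_\delta$-in-$\beta X$ description does not ``survive'' countably closed forcing, because such forcing can destroy Lindel\"ofness of the compactification itself (e.g.\ adding a new branch-like element makes the ground-model $2^{\omega_1}$ a dense non-closed, non-Lindel\"of subspace of the new one). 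The parenthetical assertion that such spaces are Rothberger is also wrong: $\mathbb{R}$ is Lindel\"of \v{C}ech-complete but not Rothberger. Your downstream bookkeeping is fine as far as it goes --- the padding argument showing \v{C}ech-complete implies $\aleph_1$-\v{C}ech-complete is correct, and indestructible $\Rightarrow$ projectively $\aleph_1$ $\Rightarrow$ $\aleph_1$-Hurewicz $\Rightarrow$ (with Theorem \ref{nThm21}) $\aleph_1$-compact would assemble correctly --- but since the entry point fails, the argument as a whole collapses; note that this chain is essentially Theorem \ref{Conj19}, which the paper proves only for spaces assumed indestructible.

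The paper's actual proof is much shorter and needs no indestructibility hypothesis on $X$ at all: the $\aleph_1$-Borel Conjecture implies CH, because $[0,1]$ \emph{is} indestructible (second countable, and countably closed forcing adds no reals) and has weight $\leq\aleph_1$, hence embeds in $[0,1]^{\omega_1}$, so projective $\aleph_1$-ness forces $2^{\aleph_0}\leq\aleph_1$. Then one uses only the structure theorem: a Lindel\"of \v{C}ech-complete space is a perfect preimage of a separable metric space, hence the union of at most $2^{\aleph_0}=\aleph_1$ compact fibers, i.e.\ $\aleph_1$-compact. The lesson is that the theorem extracts only the cardinal-arithmetic consequence of the $\aleph_1$-Borel Conjecture, applied to $[0,1]$ rather than to the given space; trying to route the given space through the indestructibility machinery cannot work precisely because Lindel\"of \v{C}ech-complete spaces, unlike second countable ones, can be destructible.
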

\begin{proof}
The $\aleph_1$-Borel Conjecture implies CH, since $[0, 1]$ is indestructible and has weight $\leq \aleph_1$.  A Lindel\"of \v{C}ech-complete space is a perfect preimage of a separable metric space, and hence is the union of $\leq 2^{\aleph_0}$ compact sets.
\end{proof}

A straightforward generalization of known results is:

\begin{thm}
$2^{\aleph_1} = \aleph_2$ implies $\aleph_1$-L-productive spaces are projectively $\aleph_1$-compact.
\end{thm}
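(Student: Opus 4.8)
The plan is to follow the proof of Theorem \ref{Thm5} one cardinal up, replacing the use of Corollary \ref{Cor3} by its $\aleph_1$-analogue Theorem \ref{Thm4a} (and the sKinny-subspace construction of Lemma \ref{Lem4a}). Let $X$ be $\aleph_1$-L-productive, let $f:X\to[0,1]^{\omega_1}$ be continuous, and put $Z=f(X)$. First I would observe that $Z$ is again $\aleph_1$-L-productive: if $L(Y)\le\aleph_1$, then $f\times\mathrm{id}_Y$ maps $X\times Y$ continuously onto $Z\times Y$, and a continuous surjection does not raise the Lindel\"of number, so $L(Z\times Y)\le L(X\times Y)\le\aleph_1$. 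Moreover $Z$ is a subspace of the compact space $[0,1]^{\omega_1}$ of weight $\aleph_1$, whence $w(Z)\le\aleph_1\le\aleph_2$ and $T(Z)\le\aleph_1$ (each compact $K\subseteq Z$ has an outer base of size $\le\aleph_1$, namely the finite unions of basic open sets of $[0,1]^{\omega_1}$ meeting the condition of containing $K$, intersected with $Z$). It then remains to prove that $Z$ is $\aleph_1$-compact.

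Assume toward a contradiction that $Z$ is not $\aleph_1$-compact, and let $\mathcal K$ be the family of all compact subsets of $Z$. This is a $k$-cover, since each compact set is a member containing itself. I would next pin down its size. Because $w(Z)\le\aleph_1$, the space $Z$ has at most $2^{\aleph_1}=\aleph_2$ open, hence at most $\aleph_2$ closed, hence at most $\aleph_2$ compact subsets; and it has at least $\aleph_2$ of them, for otherwise the singletons alone (which are compact) would already exhibit $Z$ as a union of $\le\aleph_1$ compacta. Thus $\abs{\mathcal K}=\aleph_2$. Finally $\mathcal K$ has no subcover of size $\le\aleph_1$: a subfamily of size $\le\aleph_1$ which is a $k$-cover covers every point and so again displays $Z$ as a union of $\le\aleph_1$ compact sets.

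Now I would feed $\mathcal K$ into the recursion in the proof of Lemma \ref{Lem4a}. That construction uses only that one has a $k$-cover of cardinality $\le\aleph_2$ with no subcover of size $\le\aleph_1$ — its $G_\delta$ hypothesis serves merely to bound the number of members via $\aleph_2^{\aleph_0}=\aleph_2$ (which $2^{\aleph_1}=\aleph_2$ supplies) and is not otherwise invoked. So, indexing $\mathcal K=\set{G_\alpha:\alpha<\omega_2}$ and choosing $x_\alpha\in Z\setminus\left(\bigcup_{\beta<\alpha}G_\beta\cup\set{x_\beta:\beta<\alpha}\right)$, the recursion runs through all of $\omega_2$ (it cannot halt at $\gamma<\omega_2$, as that would once more write $Z$ as a union of $\le\aleph_1$ compacta) and yields a sKinny $A=\set{x_\alpha:\alpha<\omega_2}\subseteq Z$ of size $\aleph_2$. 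Applying Theorem \ref{Thm4a} to $Z$ then shows $Z$ is not $\aleph_1$-L-productive, the desired contradiction.

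The step I expect to be delicate is this last invocation of Theorem \ref{Thm4a}, which is stated for a \emph{Lindel\"of} $X$, whereas $\aleph_1$-L-productivity only furnishes $L(Z)\le\aleph_1$ — and this gap is real, since a discrete space of size $\aleph_1$ is $\aleph_1$-L-productive yet not Lindel\"of. I would therefore need to verify that the proof of Theorem \ref{Thm4a} (the $\aleph_1$-step-up of Theorem \ref{Thm2} and Corollary \ref{Cor3}) goes through with the hypothesis $L(Z)\le\aleph_1$ replacing Lindel\"ofness; this should be routine, as both the hypothesis and the conclusion of that argument already live at the $\aleph_1$ level. Let me also flag why one cannot shortcut through the Alster degree: compact subsets of $[0,1]^{\omega_1}$ are in general only $G_{\aleph_1}$, not $G_\delta$, so $\mathcal K$ is \emph{not} a $G_\delta$ $k$-cover, and the bound $A(Z)\le\aleph_1$ obtainable from $\aleph_1$-L-productivity is too weak to force $\aleph_1$-compactness. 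This is exactly why the direct use of the compact-set $k$-cover above is essential.
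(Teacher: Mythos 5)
Most of your argument is sound, and its combinatorial core actually coincides with the paper's: the paper also uses $2^{\aleph_1}=\aleph_2$ to enumerate the relevant family of size $\le\aleph_2$ (there, the open sets about $Y=[0,1]^{\omega_1}\setminus Z$, whose complements are exactly the compact subsets of $Z$ of the form you diagonalize against) and picks points of $Z$ avoiding an $\omega_2$-chain, producing a set of size $\aleph_2$ meeting every compact subset of $Z$ in $\le\aleph_1$ points. The genuine gap is your final step, the appeal to Theorem \ref{Thm4a}. You flag the problem yourself --- Theorem \ref{Thm4a} is stated for \emph{Lindel\"of} spaces, while your $Z=f(X)$ is only guaranteed to satisfy $L(Z)\le\aleph_1$ --- but you cannot dismiss it as routine. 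The proof of Theorem \ref{Thm4a} is not in this paper (it is deferred to \cite{AAJT}), and its hypotheses deliberately mix levels: Lindel\"ofness stays at $\aleph_0$ while the type and the sKinny bound are raised to $\aleph_1$, so the step-up from Theorem \ref{Thm2} is \emph{not} uniform and your claim that ``both the hypothesis and the conclusion already live at the $\aleph_1$ level'' is inaccurate. As written, the contradiction with $\aleph_1$-L-productivity of $Z$ rests on an unproved variant of a theorem whose proof you have not examined.

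The gap is closable, and closing it recovers essentially the paper's own proof, which bypasses Theorem \ref{Thm4a} (and all type hypotheses) by exploiting the compactness of the cube. Since $Z\subseteq[0,1]^{\omega_1}$, for every open $U\supseteq[0,1]^{\omega_1}\setminus Z$ the complement $[0,1]^{\omega_1}\setminus U$ is a compact subset of $Z$, hence contains $\le\aleph_1$ points of your set $A$; i.e., $A$ is ``concentrated'' on the remainder. So let $Z'=\left([0,1]^{\omega_1}\setminus Z\right)\cup A$, with each point of $A$ isolated and the remaining points keeping their cube neighborhoods. Then $L(Z')\le\aleph_1$: cover the non-isolated part by $\le\aleph_1$ cube-open members of a given cover (subspaces of the cube have Lindel\"of number $\le w=\aleph_1$), and their union omits $\le\aleph_1$ points of $A$ by concentration. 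On the other hand $\left\{(a,a):a\in A\right\}$ is closed discrete of size $\aleph_2$ in $Z\times Z'$ (for $(z,w)$ with $w\notin A$ one has $z\neq w$ and can separate by disjoint cube-open sets; for $w\in A$ use $\left(U\cap Z\right)\times\{w\}$ with $U$ omitting $w$ when $z\neq w$), so $L(Z\times Z')\ge\aleph_2$, contradicting the $\aleph_1$-L-productivity of $Z$ directly. This is exactly what the paper does: it builds the concentrated set by choosing $z_\alpha\in(G_\alpha-G_{\alpha+1})\cap Z$ along a strictly decreasing $\omega_2$-chain of $G_{\aleph_1}$-sets about $[0,1]^{\omega_1}\setminus Z$ --- the same diagonalization as your enumeration of compacta --- and then isolates the chosen points, rather than routing through sKinny sets and Theorem \ref{Thm4a}.
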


\begin{proof}
Let $X$ be the continuous image in $[0, 1]^{\omega_1}$ of an $\aleph_1$-L-productive space.  Then $X$ is $\aleph_1$-L-productive.  If $X$ is not $\aleph_1$-compact, $Y = [0, 1]^{\omega_1} - X$ is not the intersection of $\aleph_1$ open sets.  Since $w(Y) \leq \aleph_1$ by assumption there are $\leq \aleph_2$ open sets about $Y$ such that every open set about $Y$ includes one.  We may therefore form a strictly decreasing $\omega_2$-sequence $\set{G_\alpha : \alpha < \omega_2}$ of intersections of $\aleph_1$ open subsets of $[0, 1]^{\omega_1}$ about $Y$.  Pick $z_\alpha \in (G_\alpha - G_{\alpha + 1}) \cap X$.  Take $Z = Y \cup \set{z_\beta : \beta < \omega_2}$ and make each $z_\beta$ isolated.  Then $L(Z) \leq \aleph_1$, but $L(X \times Z) > \aleph_1$, contradiction.
\end{proof}

Clearly projectively $\aleph_1$ implies projectively $\aleph_1$-compact implies projectively $\aleph_1$-Hurewicz.

Recall the space obtained from a Kurepa tree with no Aronszajn subtree (Example \ref{Example2}).  Since $P$-spaces are projectively countable, this is an example of a projectively countable Lindel\"of space which is not projectively $\aleph_1$.  In fact, it is not projectively $\aleph_1$-compact.  To see this, note that its weight is $\aleph_1$, so it is embedded in $[0, 1]^{\omega_1}$.  But compact $P$-spaces are finite.\hfill\qedsymbol

Lindel\"of $P$-spaces are Rothberger and hence indestructible \cite{ST},  but the $\aleph_1$-Borel Conjecture is unavailable, so it is not immediately obvious whether or not this space $Y$ is (projectively) $\aleph_1$-Hurewicz.  It is Hurewicz, since Lindel\"of $P$-spaces are Hurewicz \cite{ST}. 

We could use Theorems \ref{ConjDagger} and \ref{nThm21} and Lemma \ref{nLem19} to prove that indestructible, productively Lindel\"of, $\aleph_1$-\v{C}ech-complete spaces are powerfully Lindel\"of, assuming the $\aleph_1$-Borel Conjecture, but we can do better:

\begin{thm} \label{vzad_pered}
Assume CH. Suppose that $X$ is a regular $\aleph_1$-\vc-complete space which is 
productively Lindel\"of. Then $X$ is powerfully Lindel\"of. 
\end{thm}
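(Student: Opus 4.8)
The plan is to bypass the $\aleph_1$-Hurewicz/$\aleph_1$-compactness machinery (and with it the $\aleph_1$-Borel Conjecture) altogether, and instead reduce the whole statement, via the perfect-mapping structure of $\aleph_1$-\vc-complete spaces, to Lemma~\ref{Lem4Prea}. In outline: represent $X$ as a perfect preimage of a small ($w\le\aleph_1$) space $M$, push productive Lindel\"ofness down to $M$, apply Lemma~\ref{Lem4Prea} to make $M$ powerfully Lindel\"of, and then lift powerful Lindel\"ofness back up the perfect map.

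First I would establish the structure theorem that a Lindel\"of $\aleph_1$-\vc-complete space $X$ is a perfect preimage of a regular space $M$ with $w(M)\le\aleph_1$; this is the natural $\aleph_1$-analogue of the classical fact recalled above that Lindel\"of \vc-complete spaces are perfect preimages of separable metrizable spaces. Concretely, $\aleph_1$-\vc-completeness supplies a complete sequence $\{\mc U_\alpha\}_{\alpha<\omega_1}$ of open covers. Since $X$ is Lindel\"of and regular, hence paracompact, each $\mc U_\alpha$ may be replaced by a countable locally finite refinement; passing to subcovers and refinements only strengthens the hypothesis in the completeness condition, so completeness is preserved. The canonical map $g\colon X\to\prod_{\alpha<\omega_1}M_\alpha$ determined by these refinements (each factor $M_\alpha$ countable) then has image $M=g(X)$, a subspace of a Tychonoff product and hence regular, of weight $\le\aleph_1\cdot\aleph_0=\aleph_1$. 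The completeness of the sequence is exactly what forces the fibres of $g$ to be compact and $g$ to be closed, i.e.\ perfect. Verifying that $g$ is genuinely perfect is the main obstacle, being the only step that requires real work rather than citation.

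Next I would observe that $M$, as a continuous image of $X$, inherits productive Lindel\"ofness: if $Z$ is any Lindel\"of space, then $g\times\mathrm{id}_Z\colon X\times Z\to M\times Z$ is a continuous surjection, $X\times Z$ is Lindel\"of because $X$ is productively Lindel\"of, and continuous images of Lindel\"of spaces are Lindel\"of, so $M\times Z$ is Lindel\"of. Thus $M$ is a regular, productively Lindel\"of space of weight $\le\aleph_1$, and CH permits me to invoke Lemma~\ref{Lem4Prea} to conclude that $M$ is powerfully Lindel\"of, that is, $M^\omega$ is Lindel\"of.

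Finally I would transport powerful Lindel\"ofness back across $g$. Since an arbitrary product of perfect maps is perfect, $g^\omega\colon X^\omega\to M^\omega$ is perfect; and a perfect preimage of a Lindel\"of space is Lindel\"of. As $M^\omega$ is Lindel\"of, $X^\omega=(g^\omega)^{-1}(M^\omega)$ is therefore Lindel\"of, which is exactly the assertion that $X$ is powerfully Lindel\"of. The two topological facts used in this last paragraph (products of perfect maps are perfect, and perfect preimages of Lindel\"of spaces are Lindel\"of) are standard, so once the structure theorem of the first step is secured the argument closes at once.
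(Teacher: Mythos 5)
Your steps 2--4 are all sound (continuous images inherit productive Lindel\"ofness via $g\times\mathrm{id}_Z$; Lemma~\ref{Lem4Prea} applies since the image is regular of weight $\le\aleph_1$; countable products of perfect maps are perfect and perfect preimages of Lindel\"of spaces are Lindel\"of), and your remark that refining the covers preserves the completeness condition is correct. The genuine gap is at the decisive first step: the ``canonical map'' $g\colon X\to\prod_{\alpha<\omega_1}M_\alpha$ with each factor $M_\alpha$ countable does not exist as a continuous single-valued map. A cover $\mc{U}_\alpha=\{U^\alpha_n:n\in\omega\}$ gives only a \emph{multivalued} assignment of indices to points; for a continuous coordinate map into a countable factor you would need, in effect, a countable partition of $X$ refining $\mc{U}_\alpha$ by sets that are simultaneously closed and open, and regularity provides nothing of the sort. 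Concretely, take $X=\mathbb{R}$ (\vc-complete, hence $\aleph_1$-\vc-complete by repeating a complete sequence of covers cofinally): every countable regular $T_1$ space is totally disconnected, so every continuous map from the connected space $\mathbb{R}$ into a product of countable regular factors is constant, and a constant map with noncompact fibre is certainly not perfect, no matter which refinements ``determine'' it. So the object on which you propose to do the ``main obstacle'' work is not available. This multivaluedness is precisely why the paper replaces your $g$ and $g^{-1}$ by the compact-valued upper semicontinuous relations $R_\mathrm{l}$ and $R_\mathrm{r}$ (Claims~\ref{cl101} and~\ref{cl102}): compact-valued upper semicontinuity is the multivalued surrogate for perfectness, $R_\mathrm{l}(X)\subseteq\omega^{\omega_1}$ is the weight-$\le\aleph_1$ space to which Lemma~\ref{Lem4Prea} gets applied, and the transport back runs through preservation of (productive) Lindel\"ofness under compact-valued upper semicontinuous images rather than through your perfect-preimage step.

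Your strategy can be repaired, but through $\beta X$ rather than through the covers directly. The paper itself notes (proof of Theorem~\ref{nThm21}) that an $\aleph_1$-\vc-complete space is a $G_{\aleph_1}$ in $\beta X$, say $X=\bigcap_{\alpha<\omega_1}G_\alpha$ with each $G_\alpha$ open in $\beta X$. By Lindel\"ofness choose, for each $\alpha$, countably many $f_{\alpha,k}\in C(\beta X,[0,1])$ such that $X\subseteq\bigcup_{k\in\omega}f_{\alpha,k}^{-1}\left((0,1]\right)$ and $\op{cl}_{\beta X}f_{\alpha,k}^{-1}\left((0,1]\right)\subseteq G_\alpha$, and let $f\colon\beta X\to[0,1]^{\omega_1}$ be their diagonal. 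If $f(q)=f(x)$ with $x\in X$, then for each $\alpha$ there is $k$ with $f_{\alpha,k}(x)>0$, whence $f_{\alpha,k}(q)>0$ and so $q\in G_\alpha$; thus $f^{-1}(f(X))=X$, and the restriction $f\uhr X$ is perfect onto a Tychonoff image of weight $\le\aleph_1$ --- Frol\'ik's classical argument pushed up one cardinal. With this substituted for your $g$, the remainder of your proof closes, yielding a genuinely single-valued variant of the paper's multivalued argument at the cost of passing through the compactification.
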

\begin{proof}
Let $\langle\U_\alpha:\alpha<\w_1\rangle$ be a sequence of open covers of $X$
witnessing its $\aleph_1$-\vc-completeness. Without loss of generality,
each $\U_\alpha$ is locally finite and countable.
Let us write $\U_\alpha$ in the form $\{U^\alpha_n:n\in\w\}$
and consider the relation 
\[R=\{(r,x)\in \w^{\w_1}\times X: x\in\bigcap_{\alpha\in\aleph_1}\overline{U^\alpha_{r(\alpha)}}\}.\]
\begin{claim} \label{cl101}
The set-valued map $R_\mathrm{l}:X \to \omega^{\omega_1}$ assigning 
to $x\in X$ the set $\{r\in\w^{\aleph_1}: (r,x)\in R\}$
is compact-valued and upper semicontinuous.
\end{claim}
\begin{proof}
It is clear that $R_\mathrm{l}(x)$ is closed in $\omega^{\omega_1}$
for all $x\in X$. Moreover, since every $\U_\alpha$ is locally finite,
we conclude that the set $\{r(\alpha): r\in R_\mathrm{l}(x)\}$ is finite 
for every $x\in X$ and $\alpha\in\omega_1$. Thus $R_\mathrm{l}$ is compact-valued.

Now let $O\subseteq \omega^{\omega_1}$ be an open set including $R_\mathrm{l}(x)$
for some $x\in X$. Passing to a subset of $O$ including $R_\mathrm{l}(x)$,
if needed, we may additionally assume that 
$O=\bigcup\{[s]:s\in \pr_A(R_\mathrm{l}(x))\}$ for some $F\in [\w_1]^{<\w}$,
where $\pr_A:\w^{\omega_1}\to\w^A$ is the natural projection map for every $A\subseteq\w_1$
and $[s]=\{r\in R_\mathrm{l}(X):r\uhr F =s\}$ for all $s\in\w^F$.
Set 
\[U=X\setminus \bigcup\{\bigcap_{\alpha\in F}\overline{U^\alpha_{t(\alpha)}}\: : \: t\in\w^F\setminus  \pr_F(R_\mathrm{l}(x))\}.\]
Then $x\in U$. Moreover, 
since all $\U_\alpha$'s are locally finite, so is the family
$\{\bigcap_{\alpha\in F}\overline{U^\alpha_{t(\alpha)}}\: : \: t \in  \w^F\}$,
and hence $U$ is open. A direct verification shows that
$R_\mathrm{l}(y)\subseteq O$ for all $z\in U$, which completes our proof.
\end{proof}
\begin{claim} \label{cl102}
The set-valued map $R_\mathrm{r}: R_\mathrm{l}(X)=\bigcup_{x\in X}R_l(x) \to X$ assigning 
to $r$ the set $ \bigcap_{\alpha\in\w_1}\overline{U^\alpha_{r(\alpha)}}=\{x\in X:(r,x)\in R\}$
is compact-valued and upper semicontinuous.
\end{claim}
\begin{proof}
Given any $r\in R_\mathrm{l}(X)$ let us observe that $R_\mathrm{r}(r)$ is closed in $X$ and therefore Lindel\"of.
If $R_\mathrm{r}(r)$ is not compact then there exists a decreasing sequence $\langle Z_n:n\in\w\rangle$ of closed subsets 
of $R_\mathrm{r}(r)$ with empty intersection. 
 Set $C_n= \overline{U^n_{r(n)}}\cap Z_n$ for $n\in\w$ 
and $C_\alpha=\overline{U^\alpha_{r(\alpha)}}$ for $\alpha\in\omega_1\setminus \w$.
Then the sequence 
$\langle C_\alpha:\alpha\in\w_1 \rangle$ is centered, $C_\alpha$ is included in the closure of some element of
$\U_\alpha$ for all $\alpha$, and $\bigcap_{\alpha\in\w_1}C_\alpha=\emptyset$,
a contradiction.

Let us fix an open neighborhood  $U$ of $R_\mathrm{r}(r)$.
Then there exists a finite $F\subseteq\w_1$ such that $\bigcap_{\alpha\in F}\overline{U^\alpha_{r(\alpha)}}\subseteq U$,
as otherwise the family $ \{\bigcap_{\alpha\in F}\overline{U^\alpha_{r(\alpha)}}\setminus U : F\in [\w_1]^{<\w}\} $
would be centered and have empty intersection, thus contradicting the choice of the sequence $\langle \U_\alpha:\alpha<\w_1\rangle$.  
It follows from the above that $R_\mathrm{r}([r\uhr F])\subseteq U$, which completes the proof.
\end{proof}
We are now in a position to finish the proof of Theorem~\ref{vzad_pered}.
Since $X$ is productively Lindel\"of and $R_\mathrm{l}$ is compact-valued and upper semicontinuous, 
so is its image $R_\mathrm{l}(X)\subseteq \w^{\omega_1}$. By CH and Lemma \ref{Lem4Prea} all productively Lindel\"of spaces of weight
$\aleph_1$ are powerfully Lindel\"of, and hence so is 
$R_\mathrm{l}(X)$. Since $R_\mathrm{r}$ is compact-valued and upper semicontinuous
and $X=R_\mathrm{r}(R_\mathrm{l}(X))$, we conclude that $X$ is powerfully Lindel\"of as well.
\end{proof}

\section{Projective $\sigma$-compactness in finite powers does not imply productive Lindel\"ofness}
\indent E. A. Michael \cite{M22} proved under CH that productively Lindel\"of spaces are projectively $\sigma$-compact.  In fact, since finite powers of productively Lindel\"of spaces are productively Lindel\"of, under CH they are projectively $\sigma$-compact.  It is natural to wonder whether having finite powers projectively $\sigma$-compact is sufficient, perhaps assuming CH, to conclude productive Lindel\"ofness.  It isn't; an example of Todorcevic \cite{T22} will establish this.

\begin{ex}
There is a $\gamma$-space which has all finite powers projectively countable but is not productively Lindel\"of.
\end{ex}

Recall the definition of a \textit{$\gamma$-space}:

\begin{defn}[\cite{GN22}]
A cover of $X$ is an \defined{$\omega$-cover} if each finite subset of $X$ is included in some member of the cover.  A space is a \defined{$\gamma$-space} if for every open $\omega$-cover $\mc{U}$, there is a sequence of elements of $\mc{U}$, $U_n$, $n < \omega$, such that every member of $X$ is in all but finitely many $U_n$'s.
\end{defn}

$\gamma$-spaces are Lindel\"of; in fact, finite powers of $\gamma$-spaces are $\gamma$ \cite{JMSS22}.  It is easy to see that every continuous image of a $\gamma$-space is a $\gamma$-space.

\begin{lem}
All metrizable $\gamma$-spaces are zero-dimensional.
\end{lem}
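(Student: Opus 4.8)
The plan is to show directly that $X$ has a base of clopen sets, which for a metrizable space is exactly zero-dimensionality. Since $\gamma$-spaces are Lindel\"of and $X$ is metrizable, $X$ is separable metrizable; fix a compatible metric $d$. It then suffices to separate an arbitrary point from a closed set missing it by a clopen set: given $x \in X$ and an open $U \ni x$, put $B = X \setminus U$. If $B = \emptyset$ there is nothing to do, so assume $B \neq \emptyset$ and consider the continuous function
\[ g : X \to [0,1], \qquad g(y) = \frac{d(y,x)}{d(y,x) + d(y,B)}, \]
which is well defined (the denominator never vanishes, as $x \notin B$), with $g(x) = 0$ and $g \equiv 1$ on $B$.

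Next I would analyze the image $g(X) \subseteq [0,1]$. By the remark preceding this lemma, every continuous image of a $\gamma$-space is a $\gamma$-space, so $g(X)$ is a $\gamma$-subspace of $[0,1]$. Classically $\gamma$-spaces are Rothberger (every infinite $\gamma$-cover is an $\omega$-cover, so $S_1(\Omega,\Gamma)$ refines to $S_1(\mc{O},\mc{O})$), and Rothberger subspaces of the line have strong measure zero, hence Lebesgue measure zero. In particular $g(X)$ contains no nondegenerate interval, so its complement is dense in $[0,1]$ and I may choose $t \in (0,1) \setminus g(X)$. Then $g^{-1}\big([0,t)\big) = g^{-1}\big([0,t]\big)$ is simultaneously open and closed, contains $x$ (as $g(x)=0$), and misses $B$ (as $g \equiv 1$ on $B$), so it is a clopen subset of $U$ containing $x$. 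This produces the desired clopen base and shows $X$ is zero-dimensional.

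The one substantive point, and the step I would be most careful about, is that the $\gamma$-subspace $g(X)$ of $[0,1]$ cannot fill up an interval. This rests on the chain $\gamma \Rightarrow$ Rothberger $\Rightarrow$ strong measure zero $\Rightarrow$ measure zero; alternatively, one can avoid measure theory by noting that if $[a,b] \subseteq g(X)$ with $a<b$, then $[a,b]$, being closed in $g(X)$, would be a closed subspace of a $\gamma$-space and hence itself $\gamma$, contradicting the fact that a nondegenerate compact interval is not Rothberger. Everything else --- separability, the construction of $g$, and the passage from clopen separation to a clopen base --- is routine, so the real content is packaged into this single non-containment of intervals.
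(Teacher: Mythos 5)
Your proof is correct, but it is a genuinely different argument from the paper's. The paper disposes of the lemma by citation: by II.3.3 and II.3.6 of Arhangel'ski\u{i}'s book, $\gamma$-spaces are $\phi$-spaces and $\phi$-spaces have small inductive dimension $0$, which for Lindel\"of metrizable spaces coincides with zero-dimensionality. You instead give a self-contained construction: a Urysohn-type function $g$ built from the metric, the observation that $g(X)$ is a $\gamma$-subspace of $[0,1]$ and hence (via $\gamma \Rightarrow$ Rothberger $\Rightarrow$ strong measure zero, or via your closed-interval argument) omits some value $t\in(0,1)$, and the pullback $g^{-1}([0,t))=g^{-1}([0,t])$ as the desired clopen separator. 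What your route buys is transparency and extra generality: the only property of $\gamma$ you use is that continuous images in $[0,1]$ contain no nondegenerate interval, so the same proof shows that metrizable \emph{Rothberger} spaces are zero-dimensional, a strictly weaker hypothesis than $\gamma$; the paper's route buys brevity by outsourcing to the $\phi$-space machinery. Two small points of care: your parenthetical derivation of Rothberger from $S_1(\Omega,\Gamma)$ is loose as stated, since arbitrary open covers need not be $\omega$-covers --- one should either cite Gerlits--Nagy directly ($\gamma$-sets have property $C''$) or the Scheepers-diagram implications $S_1(\Omega,\Gamma)\Rightarrow S_1(\Omega,\Omega)\Rightarrow S_1(\mathcal O,\mathcal O)$ --- though your alternative argument via closed subspaces sidesteps this (note it still quietly uses that $[a,b]$ being $\gamma$ would make it Rothberger); and your opening claim that a clopen base is ``exactly zero-dimensionality'' for metrizable spaces is only safe here because $X$, being Lindel\"of metrizable, is separable, where $\operatorname{ind}=\dim$ (for general metrizable spaces they can differ, by Roy's example) --- the paper makes the same restriction explicitly.
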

\begin{proof}
By II.3.3 and II.3.6 respectively of \cite{Arh}, $\gamma$-spaces are $\phi$-spaces, and $\phi$-spaces have small inductive dimension $0$.  But for Lindel\"of metrizable spaces, that is the same as being $0$-dimensional.
\end{proof}

In \cite{T22}, Todorcevic constructs a stationary Aronszajn line which is $\gamma$, projectively countable, and not productively Lindel\"of.  We claim that his space actually has all finite powers Lindel\"of and projectively countable.  It will suffice to prove the following claim, for then all even powers of $X$ -- and hence all finite powers of $X$ -- are projectively countable.

\begin{claim}\label{easyClaim}
If $X$ is projectively countable, $X^2$ is Lindel\"of, and all continuous metrizable images of $X^2$ are zero-dimensional,
then $X^2$ is projectively countable.
\end{claim}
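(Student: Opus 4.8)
The plan is to fix a continuous map $f\colon X^2\to M$ into a separable metrizable space and prove that $f(X^2)$ is countable. First I would reduce the target: since $X^2$ is Lindel\"of, $f(X^2)$ is a Lindel\"of metrizable space, hence second countable, and by hypothesis it is zero-dimensional; a zero-dimensional second countable metrizable space embeds into the Cantor set $2^{\omega}$. So without loss of generality $f=(f_n)_{n<\omega}\colon X^2\to 2^{\omega}$, and each $C_n:=f_n^{-1}(1)$ is clopen in $X^2$. Because distinct points of $2^{\omega}$ differ in some coordinate, the cardinality of $f(X^2)$ is exactly the number of distinct membership patterns $(\mathbf{1}[(x,y)\in C_n])_{n<\omega}$; it therefore suffices to bound these patterns using projective countability of the single factor $X$.

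The bridge between $X^2$ and $X$ is to rewrite each $C_n$ in terms of clopen subsets of $X$. For this I first argue that $X$ has a base of clopen sets. Given $x$ and an open $U\ni x$, complete regularity yields a continuous $g\colon X\to[0,1]$ with $g(x)=0$ and $g\equiv 1$ off $U$; composing with the projection $\pr_1$ shows that $g(X)$ is a continuous metrizable image of $X^2$, hence zero-dimensional, so some clopen $W\subseteq g(X)$ separates $0$ from $1$, and then $g^{-1}(W)$ is a clopen neighborhood of $x$ inside $U$. Consequently $\{A\times B: A,B\text{ clopen in }X\}$ is a base for $X^2$. Now each clopen $C_n$, being closed in the Lindel\"of space $X^2$, is Lindel\"of, so the clopen rectangles contained in it admit a countable subcover; thus $C_n=\bigcup_{i<\omega}A^n_i\times B^n_i$ with all $A^n_i,B^n_i$ clopen in $X$.

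Finally I would collapse everything through $X$. Let $\mathcal{A}=\{A^n_i,B^n_i: n,i<\omega\}$, a countable family of clopen subsets of $X$, and define the continuous map $h\colon X\to 2^{\mathcal{A}}$ by $h(x)=(\mathbf{1}[x\in A])_{A\in\mathcal{A}}$. Since $2^{\mathcal{A}}$ is separable metrizable and $X$ is projectively countable, $h(X)$ is countable. But whether $(x,y)\in C_n$ is equivalent to $\bigvee_{i}\big(x\in A^n_i\wedge y\in B^n_i\big)$, which depends only on which members of $\mathcal{A}$ contain $x$ and which contain $y$, i.e. only on the pair $(h(x),h(y))$. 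Hence $f(x,y)$ is a function of $(h(x),h(y))\in h(X)\times h(X)$, so $|f(X^2)|\le|h(X)|^{2}=\aleph_0$, as desired.

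I expect the main obstacle to be the middle paragraph: deducing from the \emph{metrizable-images-are-zero-dimensional} hypothesis that the \emph{factor} $X$ itself is genuinely zero-dimensional (possesses a clopen base, not merely $\dim X=0$), and then invoking Lindel\"ofness of $X^2$ to replace an arbitrary clopen set by countably many clopen rectangles. Once this clopen-rectangle representation is secured, projective countability of $X$ finishes the argument automatically, since it forces the countably many clopen "sides'' to realize only countably many patterns on $X$, and the product structure then limits $f$ to countably many values.
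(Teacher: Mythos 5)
Your proof is correct and follows essentially the same route as the paper's: write preimages of clopen basic sets of the image as countable unions of clopen rectangles $U_i\times V_i$ (using Lindel\"ofness of $X^2$), encode membership in the countably many sides via a continuous map from $X$ into a Cantor cube of countable weight, and apply projective countability of $X$ to the factor maps so that $f(x,y)$ depends only on countably many ``patterns.'' The one difference is that you explicitly justify the decomposition of clopen subsets of $X^2$ into clopen rectangles, by first deriving a clopen base for $X$ from the hypothesis that metrizable images of $X^2$ are zero-dimensional --- a step the paper passes over with a bare ``thus'' --- which is a welcome elaboration rather than a divergence.
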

\begin{proof} 
Let $f:X^2\to Y$ be a continuous map for some metrizable $Y$. We need to show that $f[X^2]$ is countable.  Without loss of generality, $f$ is surjective, and hence $Y$ is zero-dimensional and has countable weight.
Let $\mathcal{B}$ be a countable base of $Y$ consisting of clopen sets and
$\mathcal{C} = \set{f^{-1}(B) : B\in\mathcal B}$. Then every $C\in\mathcal{C}$ is a clopen subset of 
$X^2$, and thus it may be written as a union $\bigcup_{i\in I_C}U_{i} \times V_{i} $ 
for some clopen subsets $U_{i}, V_{i}$ of $X$. Since $X^2$ is Lindel\"of, we can assume that
each $I_C$ is countable. It will be also convenient for us to assume that $I_{C_0}\cap I_{C_1}=\emptyset$
if $C_0\neq C_1$. 
Set $I=\bigcup_{C\in\mathcal C} I_C$ and consider maps $g,h:X\to 2^I$
defined as follows: $g(x)(i)=1$ (respectively $h(x)(i)=1$) if and only if $x\in U_i$ (respectively $x\in V_i$).
It follows from the above that $g$ and $h$ are continuous. Since $I$ is countable,
$g[X]$ and $h[X]$ are countable as well.
We claim that  if $g(x_0)=g(x_1)$ and $h(y_0)=h(y_1)$ then $f(x_0,y_0)=f(x_1,y_1)$.
(This easily implies that $f[X^2]$ is countable.) Suppose that $f(x_0,y_0)\neq f(x_1,y_1)$.
Then there exists $B\in\mathcal B$ such that $f(x_0,y_0)\in B$ but $f(x_1,y_1) \not\in B$.
Then $(x_0,y_0)\in C$ but $(x_1, y_1) \not\in C$, where $C=f^{-1}(B)\in\mathcal C$.
Let $i\in I_C$ be such that $(x_0,y_0)\in U_i\times V_i$ and notice that 
$(x_1,y_1)\not\in U_i\times V_i$. This means that either $x_1\not\in U_i$
or $y_1\not\in V_i$. In the first case we have $g(x_0)(i)\neq g(x_1)(i)$, while in the second case $h(y_0)(i)\neq h(y_1)(i)$. In any case,
$(g(x_0),h(y_0))\neq (g(x_1),h(y_1)) $, which completes our proof.
\end{proof}

We do not know whether Todorcevic's space is powerfully Lindel\"of.  If it is, it would show that even the addition of ``powerfully Lindel\"of" to ``projectively $\sigma$-compact in finite powers" would fail to characterize productive Lindel\"ofness.

\nocite{*}
\bibliographystyle{acm}
\bibliography{Zdomskyy}

{\rm -- Haosui Duanmu, Department of Statistics, University of Toronto, Toronto, Ontario M5S 3G3, CANADA}

{\it e-mail address:} {\rm duanmuhaosui@hotmail.com}
\\ \\
\indent{\rm -- Franklin D. Tall, Department of Mathematics, University of
  Toronto, Toronto, Ontario M5S 2E4, CANADA}

{\it e-mail address:} {\rm f.tall@utoronto.ca}
\\ \\ 
\indent{\rm -- Lyubomyr Zdomskyy, Kurt G\"odel Research Center for Mathematical Logic, W\"aringer Strasse 25, 1070 Wien, AUSTRIA}

{\it e-mail address:} {\rm lyubomyr.zdomskyy@univie.ac.at}

\end{document}